\numberwithin{equation}{section}
\newcommand{\N}{\mathbb{N}}
\newcommand{\R}{\mathbb{R}}
\newcommand{\mm}{{\mbox{\boldmath$m$}}}
\newcommand{\aalpha}{{\mbox{\boldmath$\alpha$}}}
\newcommand{\ppi}{{\mbox{\boldmath$\pi$}}}
\newcommand{\sfd}{{\sf d}}
\newcommand{\Kliminf}{K\kern-3pt-\kern-2pt\mathop{\rm lim\,inf}\limits}  % Kuratowski liminf di insiemi
\newcommand{\supp}{\mathop{\rm supp}\nolimits}   % supporto 
\newcommand{\Lip}{\mathop{\rm Lip}\nolimits}          %Lipschitz constant
\renewcommand{\d}{{\mathrm d}}
\newcommand{\restr}[1]{\lower3pt\hbox{$|_{#1}$}}
\newcommand{\eps}{\varepsilon}  
\newcommand{\nchi}{{\raise.3ex\hbox{$\chi$}}}
\newcommand{\limi}{\varliminf}
\newcommand{\lims}{\varlimsup}
\newcommand{\gopt}{{\rm{OptGeo}}}                   % displacement plans ottimali
\newcommand{\prob}[1]{\mathscr P(#1)}                   % misure di probabilita
\newcommand{\probt}[1]{\mathscr P_2(#1)}                   % misure di probabilita con momento secondo finito
\newcommand{\e}{{\rm{e}}}                           % mappa di valutazione, bisogna mettere `a mano' il tempo t
\renewcommand{\mm}{\mathfrak m}                                %misura di riferimento
\newcommand{\weakgrad}[1]{|\nabla #1|_w} % gradient ottenuto integrando lungo "quasi tutte" le geodetiche
\newcommand{\bd}{{\mathbf\Delta}}
\newcommand{\rp}{{\rm p}}
\newcommand{\s}{{\rm S}}
\newcommand{\test}[1]{{\rm Test}(#1)}
\newenvironment{proof}{\removelastskip\par\medskip   % inizio e fine dimostrazione
\noindent{\em proof} \rm}{\penalty-20\null\hfill$\square$\par\medbreak}
\newtheorem{theorem}{Theorem}[section]
\newtheorem{corollary}[theorem]{Corollary}
\newtheorem{lemma}[theorem]{Lemma}
\newtheorem{proposition}[theorem]{Proposition}
\newtheorem{definition}[theorem]{Definition}
\renewcommand{\u}{\mathcal U}
\renewcommand{\weakgrad}[1]{|D #1|_w}
\newcommand{\beq}{\begin{equation}}
\newcommand{\eeq}{\end{equation}}
\title{The Abresch-Gromoll inequality in a non-smooth setting}
\author{Nicola Gigli, Sunra Mosconi}
\begin{document}
\maketitle
\begin{abstract}
We prove that the Abresch-Gromoll inequality holds on infinitesimally Hilbertian $CD(K,N)$ spaces in the same form as the one available on smooth Riemannian manifolds.
\end{abstract}
\tableofcontents
\section*{Introduction}
The Abresch-Gromoll inequality, proved in \cite{AbreschGromoll90}, is an inequality about thin triangles on Riemannian manifolds with Ricci curvature bounded from below. It can be stated as follows. For $K\leq 0$ and $N>1$ there exists a function $f_{K,N}:(\R^+)^2\to\R^+$ - explicitly given - such that  the following is true.

Given a Riemannian manifold $M$ with Ricci curvature bounded from below by $K\leq 0$ and dimension bounded above by $N$ and a minimizing geodesic $[0,1]\ni t\mapsto \gamma_t\in M$ it holds
\begin{equation}
\label{eq:AG}
E(x)\leq f_{K,N}(h(x), l(x)),\qquad\textrm{ provided }\quad h(x)< l(x)
\end{equation}
where $E,h,l:M\to\R^+$ are defined as
\[
\begin{split}
E(x)&:=\sfd(x,\gamma_0)+\sfd(x,\gamma_1)-\sfd(\gamma_0,\gamma_1),\\
h(x)&:=\inf_{t\in[0,1]}\sfd(x,\gamma_t),\\
l(x)&:=\min\{\sfd(x,\gamma_0),\sfd(x,\gamma_1)\},
\end{split}
\]
and  $\sfd$ is the Riemannian distance on $M$.

Notice that on arbitrary geodesic spaces $(X,\sfd)$, the triangle inequality always ensures that $E(x)\leq 2h(x)$, so that the interest of the Abresch-Gromoll inequality relies on the explicit expression of $f_{K,N}$ which for arbitrary $K\leq 0$ grants
\[
\lim_{h\downarrow0}\frac{f_{K,N}(h, l)}{h}=0,
\]
and in particular for $K=0$ ensures
\[
\lim_{h/l\downarrow 0}\frac{f_{0,N}(h, l)}{h}=0.
\]
Another interesting feature of \eqref{eq:AG} is that while typically Ricci curvature bounds produce `average' estimates, the bound in \eqref{eq:AG} is pointwise in nature.

From this latter fact, it also directly follows that \eqref{eq:AG} holds on Gromov-Hausdorff limits of Riemannian manifolds with Ricci curvature bounded from below by $K$ and dimension bounded from above by $N$.

\bigskip

In recent years, Sturm \cite{Sturm06I,Sturm06II} on one side and Lott-Villani \cite{Lott-Villani09} on the other proposed a synthetic definition of Ricci curvature bounds on metric measure spaces, thus giving a meaning to the statement `the space $(X,\sfd,\mm)$ has Ricci curvature bounded from below by $K$ and dimension bounded from above by $N$', in short: $(X,\sfd,\mm)$ is a $CD(K,N)$ space. The crucial properties of such definition are the consistency with the Riemannian case and the stability w.r.t. measured-Gromov-Hausdorff (in short, mGH) convergence. 

It has been soon realized that the class of $CD(K,N)$ metric measure spaces also includes objects that are not Riemannian in nature (beside obviously the fact that it includes non-smooth structures): a result by Cordero-Erausquin, Sturm and Villani (see the last theorem in \cite{Villani09}) ensures that if we endow $\R^d$ with the Lebesgue measure  and the distance coming from a norm, we always obtain a $CD(0,d)$ space, regardless of the choice of the norm. 

It is easy to check that the qualitative behaviour of inequality \eqref{eq:AG} fails on $\R^d$ equipped with the $L^\infty$ norm, so that we  explicitly get an example of the different geometry arising in $CD(K,N)$ spaces w.r.t. that of  Riemannian manifolds with lower Ricci bounds. This  and related examples motivate the search of a synthetic notion of Ricci curvature bound which, while retaining the stability property w.r.t. mGH convergence, still ensures a `Riemannian-like' behavior of the spaces. A proposal in this direction has been made in \cite{Ambrosio-Gigli-Savare11bis} specifically for what concerns curvature bounds (i.e. no upper bound on the dimension):  according to the slightly finer analysis done in \cite{AGMRS12}, one says that $(X,\sfd,\mm)$ has Riemannian Ricci curvature bounded from below by $K$, (an $RCD(K,\infty)$ space, in short), provided it is a $CD(K,\infty)$ space and $W^{1,2}(X,\sfd,\mm)$ is Hilbert. A few comments about this definition are in order:
\begin{itemize}
\item  In abstract metric measure spaces $W^{1,2}$ is always a Banach space, and in the smooth situation a Finsler manifold is Riemannian if and only if $W^{1,2}$ is Hilbert. In this sense the additional requirement that such a space is Hilbert can be seen as the  non-smooth analogous of `the norm comes from a scalar product' which distinguishes Riemannian manifolds among Finsler ones.
\item  It is not trivial that the $RCD(K,\infty)$ condition is stable w.r.t. mGH convergence. The crucial ingredient to prove it is the identification of the gradient flow in $L^2$ of the (generically non-quadratic) Dirichlet energy and the gradient flow in $(\probt X,W_2)$ of the relative entropy (\cite{Ambrosio-Gigli-Savare11}, see also \cite{GigliKuwadaOhta10} for  the original argument given in the technically simpler case of finite dimensional Alexandrov spaces with curvature bounded from below). Once such identification is obtained, one notices that $W^{1,2}$ is Hilbert if and only if the gradient flow of the `Dirichlet energy' is linear and then read this fact from the optimal transport point of view, where the stability of the gradient flow of the entropy - and thus of the linearity requirement - can be much more easily proved (\cite{Ambrosio-Gigli-Savare11bis}, \cite{Gigli10}, \cite{AGMS12}).
\item The requirement `$W^{1,2}$ is Hilbert' is analytic in nature, not geometric. In particular, as of today, it is not clear whether under an additional curvature assumptions it implies that the tangent spaces - intended either as spaces of directions or as pointed-mGH limits of rescaled spaces - are Euclidean or not. This is unknown even assuming some finite dimensionality condition.
\end{itemize}
According to \cite{Gigli12}, a space $(X,\sfd,\mm)$ such that $W^{1,2}(X,\sfd,\mm)$ is Hilbert will be called infinitesimally Hilbertian.  Aim of this paper is to derive a first concrete geometric consequence out of the infinitesimal Hilbertianity hypothesis. Specifically, we will prove that on infinitesimally Hilbertian $CD(K,N)$ spaces the Abresch-Gromoll inequality \eqref{eq:AG} holds with the same functions $f_{N,K}$ as those given in the smooth setting.

Looking at the original proof of \eqref{eq:AG}, one sees that three ingredients play a role:
\begin{itemize}
\item[1)] The Laplacian comparison estimate for the distance function.
\item[2)] The linearity of the Laplacian.
\item[3)] The weak maximum principle. 
\end{itemize}
Hence the proof can be achieved in the non-smooth setting if we are able to derive the same three ingredients. It is  immediate to see that (2) above is equivalent to the infinitesimal Hilbertianity assumption  (recall that on Finsler manifolds the Laplacian is non-linear in general, and it is so if and only if the manifold is Riemannian, see \cite{Shen98}). The sharp Laplacian comparison estimate has been proved in \cite{Gigli12} - where a  definition of distributional Laplacian has also been proposed - for a large class of $CD(K,N)$ spaces including the infinitesimally Hilbertian ones. Finally, the weak maximum principle for local sub/superminimizers of the energy is a direct consequence of the Poincar\'e inequality (known to be valid on $CD(K,N)$ spaces \cite{Lott-Villani-Poincare}, \cite{Rajala11}), and it is also easily seen that local sub/superminimizers  of the energy can be characterized as those functions having positive/negative distributional Laplacian according to the definition given in \cite{Gigli12}.

Hence indeed all the necessary ingredients are at our disposal, and in this paper we collect them and prove \eqref{eq:AG} in the non-smooth world. In this sense, possibly this work is not so exciting, meaning that we will follow step by step the original proof of \eqref{eq:AG}. The emphasis is instead on the fact that the same computations done originally in \cite{AbreschGromoll90} are actually doable even without an underlying smooth structure, and on the possibility of deriving  a genuinely geometric consequence out of the analytic assumption of infinitesimal Hilbertianity.

\bigskip

The paper aims to give a detailed overview of the analytic tools needed to get (1), (2) and (3) above, so that it can be read without referring to the longer exposition given in \cite{Gigli12} (and \cite{Ambrosio-Gigli-Savare11bis}, \cite{GM12}). These tools are recalled in Section \ref{se:lapcomp}, where all the definitions and theorems are formulated on infinitesimally Hilbertian spaces. Notice also that the definition of distributional Laplacian is only given for locally Lipschitz functions, which is sufficient if one aims at the Laplacian comparison for the distance. This will greatly simplify the exposition. In Section \ref{se:exest} we then prove the Abresch-Gromoll inequality.

\section{Preliminaries}
\subsection{Metric spaces and quadratic transportation distance}
Throughout all the paper $(X,\sfd)$ will be a complete and separable metric space. $(X,\sfd)$ is said to be proper whenever all closed and bounded sets are compact. We denote by $B_r(x)$ the open ball of center $x \in X$ and radius $r>0$, by $\overline B_r(x):=\{y:\sfd(y,x)\leq r\}$ the closed one and by $S_r(x):=\{y:\sfd(y,x)=r\}$ the sphere.

$C([0,1],X)$ is the complete and separable metric  space of continuous curves from $[0,1]$ with values in $X$ equipped with the $\sup$ norm. 

A curve $\gamma \in C([0,1], X)$ is said to be absolutely continuous if there exists a function $f\in L^1([0,1])$ such that
\begin{equation}\label{def:ACcurve}
\sfd(\gamma_t,\gamma_s)\leq \int_s^t f(r) \;\d r, \quad \forall t,s \in [0,1],\ \  t<s.
\end{equation} 
The set of absolutely continuous curves from $[0,1]$ to $X$ will be denoted by $AC([0,1],X)$. More generally, if the function $f$ in \eqref{def:ACcurve} belongs to $L^q([0,1])$, $q \in [1,\infty]$, $\gamma$ is said to be $q$-absolutely continuous, and $AC^q([0,1],X)$ is the corresponding set of $q$-absolutely continuous curves. Recall (see for example Theorem 1.1.2 in \cite{Ambrosio-Gigli-Savare08}) that if $\gamma \in AC^q([0,1],X)$ then the limit
$$\lim_{h \to 0} \frac{\sfd(\gamma_{t+h},\gamma_t)}{|h|} $$
exists for a.e. $t \in [0,1]$. Such function is called \emph{metric speed} or \emph{metric derivative}, is denoted by $|\dot{\gamma}_t|$ and is the minimal (in the a.e. sense) $L^q$ function which can be chosen as $f$ in the right hand side of \eqref{def:ACcurve}. 

For every $t \in [0,1]$, we define the \emph{evaluation map} $\e_t:C([0,1],X)\to X$ as
$$\e_t(\gamma):=\gamma_t, \qquad\qquad \forall \gamma \in C([0,1],X). $$

For $f:X\to \R$ the \emph{local Lipschitz constant} $|D f|:X\to [0,\infty]$ is defined by 
\begin{equation}
|Df|(x):=\limsup_{y\to x} \frac{|f(y)-f(x)|}{\sfd(y,x)}, \quad \text{ if $x$ is not isolated, $0$ otherwise. }
\end{equation}

Given a Borel measure $\sigma$ on $X$,  $\supp(\sigma)$ is the support of $\sigma$, i.e. the smallest closed set on which $\sigma$ is concentrated.

We denote with $\prob X$ the set of Borel probability measures on $X$ and by $\probt X\subset \prob X$ the set of probability measures with finite second moment, i.e. the set of those $\mu\in\prob X$ such that $\int\sfd^2(x,x_0)\,\d\mu(x)<\infty$ for some - and thus any - $x_0\in X$.

For $\mu,\nu\in\probt X$ the quadratic optimal transport distance $W_2$ between them is defined by
\[
W_2^2(\mu,\nu):=\inf\int\sfd^2(x,y)\,\d\aalpha(x,y),
\]
where the infimum is taken among all plans $\aalpha\in\prob{X^2}$ such that $\pi^1_\sharp\aalpha=\mu$ and $\pi^2_\sharp\aalpha=\nu$, $\pi^1,\pi^2$ being the projections on the first and second coordinates respectively.

$(X,\sfd)$ is said to be geodesic if for any $x,y\in X$ there exists a curve $\gamma:[0,1]\to X$, called constant speed geodesic or simply geodesic, such that $\gamma_0=x$, $\gamma_1=y$ and
\[
\sfd(\gamma_t,\gamma_s)=|t-s|\sfd(\gamma_0,\gamma_1),\qquad\forall t,s\in[0,1].
\]
If $(X,\sfd)$ is geodesic, the distance $W_2$ can also be expressed as
\begin{equation}
\label{eq:w2geod}
W_2^2(\mu,\nu)=\inf\iint_0^1|\dot\gamma_t|^2\,\d t\,\d\ppi(\gamma), 
\end{equation}
the infimum being taken among all plans $\ppi\in\prob{C([0,1],X)}$, where the right-hand-side is taken $+\infty$ by definition if $\ppi$ is not concentrated on $AC^2([0,1],X)$. It turns out that the $\inf$ in \eqref{eq:w2geod} is always realized and that any minimizer $\ppi$ is concentrated on geodesics (see e.g. Section 2.2 in \cite{Ambrosio-Gigli11}). Minimizers in \eqref{eq:w2geod} are called optimal geodesic plans and the set of all such minimizers will be denoted by $\gopt(\mu,\nu)$.

\subsection{Sobolev functions}
Here we recall the definition of Sobolev function on a metric measure space $(X,\sfd,\mm)$ with values in $\R$. There are various approaches to such definition, most of them being equivalent, see \cite{Ambrosio-Gigli-Savare-pq}. Here we adopt the definition proposed in \cite{Ambrosio-Gigli-Savare11}, \cite{Ambrosio-Gigli-Savare-pq} along with the presentation given in \cite{Gigli12}.

In this section, $(X,\sfd,\mm)$ is such that
\begin{equation}
\label{eq:mms}
\text{ $(X,\sfd)$ is complete and separable and $\mm$ is a non-negative Radon measure on $X$.}
\end{equation}

\begin{definition}[Test plans]
Let $(X,\sfd,\mm)$ be a metric measure space as in \eqref{eq:mms}. We say that a  plan $\ppi\in\prob{C([0,1],X)}$ is a test plan provided 
\begin{equation}
\label{eq:deftest}
\begin{split}
(\e_t)_\sharp\ppi&\leq C\mm,\qquad \forall t\in[0,1],\ \textrm{ for some constant }C>0,\\
\iint_0^1|\dot\gamma_t|^2\,\d t\,\d\ppi(\gamma)&<\infty.
\end{split}
\end{equation}
\end{definition}

\begin{definition}[Sobolev class and weak upper gradients]
Let $(X,\sfd,\mm)$ be a metric measure space as in \eqref{eq:mms} and  $f:X\to\R$ a Borel function. We say that $f$ belongs to the Sobolev class $\s^2_{loc}(X,\sfd,\mm)$ (resp. $\s^2(X,\sfd,\mm)$) provided there exists  $G\in L^2_{loc}(X,\mm)$ (resp. $G\in L^2(X,\mm)$) such that 
\begin{equation}
\label{eq:defsob}
\int|f(\gamma_1)-f(\gamma_0)|\,\d\ppi(\gamma)\leq \iint_0^1G(\gamma_t)|\dot\gamma_t|\,\d t\,\d\ppi(\gamma),\qquad\forall \textrm{ test plan }\ppi.
\end{equation}
Any such  $G$ is called weak upper gradient of $f$.
\end{definition}
It turns out that for $f\in\s^2_{loc}(X,\sfd,\mm)$ there exists a minimal - in the $\mm$-a.e. sense - weak upper gradient $G$, which we  will denote by $\weakgrad f$. 
A crucial property of Sobolev functions and minimal weak upper gradients is the following locality principle (see for instance Proposition 4.8 in \cite{Ambrosio-Gigli-Savare08} or Corollary 2.21 in \cite{BjornBjorn11}):
\begin{equation}
\label{eq:localweak}
\weakgrad f=\weakgrad g,\qquad\qquad\mm\textrm{-a.e. on }\{f=g\},
\end{equation}
and that for any negligible $N\subset\R$, it holds
\begin{equation}
\label{eq:nullset}
\weakgrad f=0,\qquad\qquad\mm\textrm{-a.e. on }f^{-1}(N).
\end{equation}
These properties allow to define the Sobolev class $\s^2(\Omega)$ of functions defined in an open set $\Omega$ with Sobolev regularity.
\begin{definition}[$\s^2_{loc}(\Omega)$ and $\s^2(\Omega)$]
Let $\Omega\subset X$ be open. The class $\s^2_{loc}(\Omega)$ is the class of all Borel functions $f:\Omega\to \R$ such that $f\nchi\in\s^2(X,\sfd,\mm)$ for any Lipschitz function $\nchi$ with support in $\Omega$. For $f\in\s^2_{loc}(\Omega)$, the minimal weak upper gradient $\weakgrad f:\Omega\to\R^+$ is defined as
\begin{equation}
\label{eq:localomega}
\weakgrad f:=\weakgrad{(f\nchi)},\qquad\mm\text{\rm-a.e. on }\{\nchi=1\},
\end{equation}
where $\nchi:X\to\R$ is any Lipschitz function supported in $\Omega$. Notice that, thanks to \eqref{eq:localweak}, \eqref{eq:localomega} well defines $\mm$-a.e. a map $\weakgrad f\in L^2_{loc}(\Omega)$. 

The subclass $\s^2(\Omega)\subset\s^2_{loc}(\Omega)$ is the set of functions $f$ such that $\weakgrad f\in L^2(\Omega,\mm)$.
\end{definition}

Notice that if $f:X\to\R$ is Lipschitz, then certainly the local Lipschitz constant  is a weak upper gradient, so we get the inequality
\begin{equation}
\label{eq:weaklip}
\weakgrad f\leq|Df|\leq  \Lip(f),\qquad\mm\text{\rm-a.e.},
\end{equation}
where by $\Lip(f)$ we mean the (global) Lipschitz constant of $f$.

Basic calculus rules are
\begin{equation}
\label{eq:calcrule}
\begin{split}
\weakgrad{(\alpha f+\beta g)}&\leq |\alpha|\weakgrad f+|\beta|\weakgrad g,\qquad\forall f,g\in\s^2_{loc}(\Omega),\ \alpha,\beta\in\R,\\
\weakgrad{(fg)}&\leq |f|\weakgrad g+|g|\weakgrad f,\qquad\forall f,g\in\s^2_{loc}(\Omega)\cap L^\infty_{loc}(\Omega),
\end{split}
\end{equation}
these inequalities being valid $\mm$-a.e.. The second one in \eqref{eq:calcrule} also holds for $f\in\s^2_{loc}(\Omega)$ and $g$ Lipschitz, meaning that also in this case the product $fg$ belongs to $\s^2_{loc}(\Omega)$ and the weak Leibniz rule holds. The following version of the chain rule is also available:
\begin{equation}
\label{eq:chainbase}
\weakgrad{(\varphi\circ f)}=|\varphi'|\circ f\,\weakgrad f,\qquad\mm\textrm{-a.e.,}
\end{equation}
for $f\in\s^2_{loc}(\Omega)$ and $\varphi:I\to \R$ Lipschitz, where $I\subset\R$ is any interval such that $\mm(f^{-1}(\R\setminus I))=0$ (in \eqref{eq:chainbase}, thanks to \eqref{eq:nullset}, we can and will define the right hand side as 0 at points $x$ such that $\varphi$ is not differentiable at $f(x)$).

The Sobolev space $W^{1,2}(X,\sfd,\mm)$ is then defined as $L^2(X,\mm)\cap \s^2(X,\sfd,\mm)$ endowed with the norm
\[
\|f\|_{W^{1,2}}^2:=\|f\|^2_{L^2}+\|\weakgrad f\|^2_{L^2}.
\]
Notice that in general $W^{1,2}(X,\sfd,\mm)$ is not an Hilbert space (consider for instance the case of finite dimensional Banach spaces). Spaces such that $W^{1,2}$ is Hilbert will be called infinitesimally Hilbertian, see Section \ref{se:infhil}

\subsection{Curvature-Dimension bounds}
Here we recall the definition of $CD(K,N)$, $1<N<\infty$, spaces given by Sturm in \cite{Sturm06II} and Lott-Villani in \cite{Lott-Villani09} (the latter reference deals with the case $K=0$ only) and their basic properties.

Let $u:[0,\infty)\to\R$ be a convex continuous and sublinear (i.e. $\lim_{z\to+\infty}\frac{u(z)}{z}=0$) function satisfying $u(0)=0$. Let $\mathcal M^+(X)$ be the space of finite non-negative Borel measures on $X$. The \emph{internal energy} functional  $\u:\mathcal M^+(X)\to\R\cup\{+\infty\}$ associated to $u$ is well defined by the formula
\[
\u (\mu):=\int u(\rho)\,\d\mm,\qquad \mu=\rho\mm+\mu^s,\ \mu^s\perp\mm.
\]
Jensen's inequality ensures that if $\mm(\supp(\mu))<\infty$, then $\u(\mu)>-\infty$. More generally, the functional $\u$ is lower semicontinuous in $\prob{B}\subset\mathcal M^+ (X)$ w.r.t. convergence in duality with $C_b(B)$, for any closed set $B$ such that $\mm(B)<\infty$.

Functions $u$ of interest for us are 
\[
\begin{split}
u_N(z)&:=-z^{1-\frac1N},\qquad N\in(1,\infty),\\
\end{split}
\]
and we will denote the associated internal energies by $\u_N$ respectively.

For $N\in(1,\infty)$ and $K\in\R$, the \emph{distorsion coefficients} $\tau_{K,N}^{(t)}(\theta)$ are the functions $[0,1]\times[0,\infty)\ni (t,\theta)\mapsto \tau_{K,N}^{(t)}(\theta)\in[0, +\infty]$ defined by
\[
\tau^{(t)}_{K,N}(\theta):=\left\{
\begin{array}{ll}
+\infty,&\qquad\textrm{if }K\theta^2\geq (N-1)\pi^2,\\
\displaystyle{t^{\frac1N}\left(\frac{\sin(t\theta\sqrt{K/(N-1)})}{\sin(\theta\sqrt{K/(N-1)})}\right)^{1-\frac1N}},&\qquad\textrm{if }0<K\theta^2<(N-1)\pi^2,\\
t,&\qquad\textrm{if }K\theta^2=0,\\
\displaystyle {t^{\frac1N}\left(\frac{\sinh(t\theta\sqrt{-K/(N-1)})}{\sinh(\theta\sqrt{-K/(N-1)})}\right)^{1-\frac1N}},&\qquad\textrm{if }K\theta^2<0.
\end{array}
\right.
\]

\begin{definition}[Weak Ricci curvature bound] Let $(X,\sfd,\mm)$ be a  metric measure space such that bounded sets have finite $\mm$-measure. 
We say that $(X,\sfd,\mm)$ is a $CD(K,N)$ space, $K\in\R$, $N\in(1,\infty)$ provided for any $\mu,\nu\in\prob{\supp(\mm)}$ with bounded support there exists $\ppi\in\gopt(\mu,\nu)$ such that
\begin{equation}
\label{eq:cd}
\u_{N'}((\e_t)_\sharp\ppi)\leq-\int\tau^{(1-t)}_{K,N'}\big(\sfd(\gamma_0,\gamma_1)\big)\rho^{-\frac1{N'}}(\gamma_0)+\tau^{(t)}_{K,N'}\big(\sfd(\gamma_0,\gamma_1)\big)\eta^{-\frac1{N'}}(\gamma_1)\,\d\ppi(\gamma),\quad\forall t\in[0,1],
\end{equation}
for any $N'\geq N$, where $\mu=\rho\mm+\mu^s$ and $\nu=\eta\mm+\nu^s$, with $\mu^s,\nu^s\perp\mm$.
\end{definition}
In the following proposition we collect those basic properties of $CD(K,N)$ spaces we will use later on. 

Recall that $(X,\sfd,\mm)$ is said to be doubling  provided there exists a constant $C>0$ such that
\begin{equation}
\label{eq:doubling}
\mm(B_{2r}(x))\leq C\mm(B_r(x)),\qquad\forall r>0,\ x\in X.
\end{equation}
\begin{proposition}[Basic properties of $CD(K,N)$ spaces]\label{prop:basecd}
Let $(X,\sfd,\mm)$ be a $CD(K,N)$ space with $\supp(\mm)=X$, $K\in\R$, $N\in (1,+\infty)$. Then $(X,\sfd)$ is proper and geodesic, $\mm$ is doubling and $(X,\sfd,\mm)$ supports a weak local 1-1 Poincar\'e inequality, i.e. for any bounded Borel function $f:X\to\R$ and any upper gradient $G$ of $f$ it holds
\begin{equation}
\label{eq:11poinc}
\frac{1}{\mm(B_r(x))}\int_{B_r(x)}\Big|f-\langle f\rangle_{B_r(x)} \Big|\,\d\mm\leq Cr\frac{1}{\mm(B_{2r}(x))}\int_{B_{2r}(x)}G\,\d\mm,
\end{equation}
where $\langle f\rangle_{B_r(x)}:=\frac{1}{\mm(B_r(x))}\int_{B_r(x)} f\,\d\mm$, for some constant $C$ depending only on $(X,\sfd,\mm)$. Also, the Bishop-Gromov comparison estimates holds, i.e. for any $x\in \supp(\mm)$ it holds
\begin{equation}
\label{eq:BGv}
\begin{split}
\frac{\mm(B_r(x))}{\mm(B_R(x))}&\geq\left\{
\begin{array}{ll}
\displaystyle{\frac{\int_0^r\sin(t\sqrt{K/(N-1)})^{N-1}\,\d t}{\int_0^R\sin(t\sqrt{K/(N-1)})^{N-1}\,\d t}}&\qquad\textrm{ if }K>0,\\
&\\
\displaystyle{\frac {r^N}{R^N}}&\qquad\textrm{ if }K=0,\\
&\\
\displaystyle{\frac{\int_0^r\sinh(t\sqrt{K/(N-1)})^{N-1}\,\d t}{\int_0^R\sinh(t\sqrt{K/(N-1)})^{N-1}\,\d t}}&\qquad\textrm{ if }K<0,\\
\end{array}
\right.
\end{split}
\end{equation}
for any $0<r\leq R\leq \pi\sqrt{(N-1)/(\max\{K,0\})}$.

Finally, if $K>0$ then $(\supp(\mm),\sfd)$ is compact and with diameter at most $\pi\sqrt{\frac{N-1}K}$.
\end{proposition}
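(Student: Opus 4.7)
The approach is that essentially all the statements follow from one master computation---the Bishop-Gromov volume inequality, obtained by plugging carefully chosen reference measures into the defining inequality \eqref{eq:cd}. I would derive it first, then read off doubling, properness, the diameter bound, and compactness as consequences, and finally handle geodesicity and the Poincar\'e inequality separately.

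For \eqref{eq:BGv}, fix $x\in\supp(\mm)$, radii $0<r<R$ in the admissible range, and $N'\geq N$. Consider $\nu_R:=\mm(B_R(x))^{-1}\mm|_{B_R(x)}$ together with absolutely continuous measures $\mu_\eps$ supported in $B_\eps(x)$ approximating $\delta_x$, and apply \eqref{eq:cd} to $(\mu_\eps,\nu_R)$. Since $\mu_\eps$ concentrates at $x$, the contribution of the first distortion term is negligible and the optimal plan $\ppi_\eps$ is asymptotically ``radial'': its geodesics start near $x$ and end in $B_R(x)$, so $\supp((\e_t)_\sharp\ppi_\eps)\subset\overline{B}_{tR+(1+t)\eps}(x)$. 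The CD inequality then yields an explicit pointwise bound on the density $\rho_t$ of $(\e_t)_\sharp\ppi_\eps$ in terms of $\tau^{(t)}_{K,N'}(\sfd(\gamma_0,\gamma_1))$ and the density of $\nu_R$; integrating this over the relevant ball, sending $\eps\downarrow 0$ and then $N'\downarrow N$, and writing $t=r/R$ produces \eqref{eq:BGv}.

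From \eqref{eq:BGv} doubling is immediate (locally uniformly in $r$ when $K<0$, globally when $K\geq 0$), and properness follows from the standard fact that a complete metric space equipped with a Radon measure of full support which is doubling on bounded scales is proper, since doubling forces closed balls to be totally bounded and completeness upgrades this to compactness. The Bonnet-Myers-type diameter bound for $K>0$ comes from a separate application of \eqref{eq:cd}: if $x,y\in\supp(\mm)$ satisfied $\sfd(x,y)>\pi\sqrt{(N-1)/K}$, then taking absolutely continuous measures $\mu_\eps,\nu_\eps$ supported in small balls around $x,y$ one finds $\tau^{(t)}_{K,N}(\sfd(\gamma_0,\gamma_1))=+\infty$ on a set of full $\ppi$-measure for any $t$ bounded away from $\{0,1\}$, forcing the right-hand side of \eqref{eq:cd} to be infinite while the left-hand side stays finite---contradiction. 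Combined with properness, this gives compactness of $\supp(\mm)$ in the $K>0$ case.

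For geodesicity, I would approximate arbitrary $x,y\in X$ by sequences $\mu_n,\nu_n\in\prob{\supp(\mm)}$ with supports shrinking to $\{x\}$ and $\{y\}$; the CD condition supplies optimal geodesic plans $\ppi_n\in\gopt(\mu_n,\nu_n)$, whose supporting curves are uniformly Lipschitz and lie in a fixed bounded set by the triangle inequality. Ascoli--Arzel\`a, which is applicable thanks to properness just established, then extracts a limiting curve that is a geodesic from $x$ to $y$. The weak local $1$-$1$ Poincar\'e inequality \eqref{eq:11poinc} does not follow from the Bishop-Gromov analysis, and I would invoke it directly from \cite{Lott-Villani-Poincare} and \cite{Rajala11}. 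The main technical obstacle in this whole program is Step 1, where one must pass carefully from absolutely continuous approximations of $\delta_x$ to the Dirac itself while keeping rigorous track of the density formula for $(\e_t)_\sharp\ppi$ under radial transport: getting this right is precisely what makes the sharp distortion coefficients $\tau^{(t)}_{K,N}$ appear in \eqref{eq:BGv}.
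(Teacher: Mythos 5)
Your proposal is correct in spirit but takes a genuinely different route from the paper: the paper's ``proof'' of Proposition \ref{prop:basecd} consists literally of the single sentence pointing the reader to \cite{Lott-Villani-Poincare}, \cite{Rajala11} for the Poincar\'e inequality and to \cite{Sturm06II} or \cite{Villani09} for everything else. You, by contrast, attempt to reconstruct the arguments from the $CD(K,N)$ definition. Your reconstruction is essentially the one found in those references: Bishop-Gromov via approximation of $\delta_x$ together with the defining inequality \eqref{eq:cd}; doubling and properness as corollaries; Bonnet--Myers by forcing the distortion coefficient $\tau^{(t)}_{K,N}$ to blow up; geodesicity via Ascoli--Arzel\`a applied to optimal geodesic plans between shrinking approximations of Diracs; Poincar\'e cited directly. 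The paper's approach buys brevity (the result is classical and the paper has nothing new to add); yours buys self-containedness and exhibits where the distortion coefficients actually enter.

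One step in your Bishop-Gromov sketch is imprecise in a way worth flagging. You write that the $CD$ inequality ``yields an explicit pointwise bound on the density $\rho_t$,'' which you would then integrate. This is not how the standard argument works, and the inequality \eqref{eq:cd} does not produce pointwise density bounds: it controls only the integral functional $\u_{N'}((\e_t)_\sharp\ppi)=-\int\rho_t^{1-1/N'}\,\d\mm$. The route in \cite{Sturm06II} is to combine the $CD$ upper bound on $\u_{N'}(\mu_t)$ with the \emph{lower} bound $\u_{N'}(\mu_t)\geq -\mm(\supp\mu_t)^{1/N'}$ coming from Jensen's inequality applied to the concave function $z\mapsto z^{1-1/N'}$, using the containment $\supp\mu_t\subset \overline B_{tR+(1+t)\eps}(x)$ that you correctly identified. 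Matching the two bounds and then sending $\eps\downarrow 0$, $N'\downarrow N$ produces \eqref{eq:BGv} without ever knowing $\rho_t$ pointwise. (Pointwise density bounds do hold under the stronger measure contraction property, which is perhaps where the confusion comes from.) Similarly, in your Bonnet--Myers argument, keep the sign convention in mind: when $\tau^{(t)}_{K,N'}=+\infty$ the right-hand side of \eqref{eq:cd} is $-\infty$, while the left-hand side is $\geq -\mm(\supp\mu_t)^{1/N'}>-\infty$ again by Jensen; the contradiction comes from $-\infty\geq(\textrm{finite})$, not from an infinite right side exceeding a finite left side. With these two clarifications the outline matches the classical proofs.
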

\begin{proof}
For the Poincar\'e inequality see \cite{Lott-Villani-Poincare} for the original argument requiring the non-branching condition and the more recent paper \cite{Rajala11} for the same result without such assumption. For the other properties, see \cite{Sturm06II} or the final chapter of \cite{Villani09}.
\end{proof}
We conclude this introduction recalling that on general metric measure spaces $(X,\sfd,\mm)$, given a Lipschitz function $f:X\to\R$, typically the local Lipschitz constant $|Df|$ - which is a metric object - and the minimal weak upper gradient $\weakgrad f$ - which is metric-measure theoretic - do not coincide, the only information available being $\weakgrad f\leq |D f|$ $\mm$-a.e., which follows from the fact that the local Lipschitz constant is an upper gradient for Lipschitz functions. A deep result due to Cheeger \cite{Cheeger00} ensures that if $\mm$ is doubling and $(X,\sfd,\mm)$ supports a weak local 1-1 Poincar\'e inequality, then it holds
\begin{equation}
\label{eq:cheeger2}
\weakgrad f=|D f|,\qquad \mm-a.e.\qquad \forall f:X\to\R\textrm{ locally Lipschitz}.
\end{equation}
In particular,  we have the  following result.
\begin{theorem}\label{thm:cheeger}
Let $(X,\sfd,\mm)$ be a $CD(K,N)$ space, $K\in \R$, $N\in(1,\infty)$. Then \eqref{eq:cheeger2} holds.
\end{theorem}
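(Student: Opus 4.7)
The plan is to apply the deep result of Cheeger \cite{Cheeger00} recalled immediately before the statement: on any metric measure space where $\mm$ is doubling and a weak local 1-1 Poincaré inequality holds, the identity \eqref{eq:cheeger2} is true for every locally Lipschitz function. Therefore the entire task reduces to checking these two structural hypotheses for a $CD(K,N)$ space with $N<\infty$.

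Both prerequisites are already packaged in Proposition \ref{prop:basecd}. First, the doubling property \eqref{eq:doubling} is extracted from the Bishop-Gromov volume comparison \eqref{eq:BGv}: choosing $R=2r$ (with the mild adjustment that in the $K>0$ case one restricts to $R\leq \pi\sqrt{(N-1)/K}$, which is automatic since the space is bounded by the last assertion of the proposition) gives a uniform doubling constant depending only on $K$ and $N$. Second, the weak local 1-1 Poincaré inequality is literally \eqref{eq:11poinc}. Feeding these two facts into Cheeger's theorem yields \eqref{eq:cheeger2} at once.

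The only mildly delicate point is the mismatch in the standing hypotheses: Proposition \ref{prop:basecd} assumes $\supp(\mm)=X$, whereas Theorem \ref{thm:cheeger} does not. I would handle this by first restricting attention to the metric measure space $(\supp(\mm),\sfd,\mm)$, which is still complete, separable and $CD(K,N)$, now with full support; Cheeger's theorem gives \eqref{eq:cheeger2} there. Since $X\setminus\supp(\mm)$ is $\mm$-negligible and both $\weakgrad f$ and $|Df|$ are only meaningful $\mm$-a.e., the identity transfers to $X$ without change.

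I do not foresee any substantive obstacle: the proof is a direct concatenation of Proposition \ref{prop:basecd} with the cited theorem of Cheeger. The paper's real work (the nonsmooth Laplacian comparison, the weak maximum principle, and then the Abresch-Gromoll estimate itself) lies ahead; this statement is a convenient packaging that will be invoked repeatedly to identify $|Df|$ with $\weakgrad f$ on locally Lipschitz functions such as distance functions.
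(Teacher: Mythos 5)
Your proposal is correct and matches the paper's own (implicit) argument exactly: the paper states Theorem \ref{thm:cheeger} as an immediate corollary of Cheeger's result combined with the doubling and Poincaré properties established in Proposition \ref{prop:basecd}, which is precisely the concatenation you describe. Your additional remark about restricting to $\supp(\mm)$ to reconcile the hypotheses is a reasonable and correct point of care that the paper leaves tacit.
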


\section{Laplacian comparison}\label{se:lapcomp}
\subsection{Infinitesimally Hilbertian spaces and the object $\nabla f\cdot\nabla g$}\label{se:infhil}
On a general metric measure spaces the map $\s^2(X,\sfd,\mm)\ni f\mapsto \int \weakgrad f^2\,\d\mm$ may not be a quadratic form. This can be easily seen if one considers $\R^d$ equipped with the Lebesgue measure and a non-Hilbertian norm.

In this section, as well as in the rest of the paper, we will focus on those metric measure spaces which, from the Sobolev calculus' point of view, resemble a Riemannian structure rather than a general Finsler one. The definition as well as the foregoing discussion comes from \cite{Ambrosio-Gigli-Savare11bis} and \cite{Gigli12}.
\begin{definition}[Infinitesimally Hilbertian spaces]
Let $(X,\sfd,\mm)$ be as in \eqref{eq:mms}. We say that it is infinitesimally Hilbertian whenever the map $\s^2(X, \sfd, \mm)\ni f\mapsto \|\weakgrad f\|_{L^2}^2$ is a quadratic form, i.e. when it satisfies
\begin{equation}
\label{eq:infhil}
\|\weakgrad{(f+g)}\|_{L^2}^2+\|\weakgrad{(f-g)}\|_{L^2}^2=2\Big(\|\weakgrad{f}\|_{L^2}^2+\|\weakgrad{g}\|_{L^2}^2\Big).
\end{equation}
\end{definition}
On infinitesimally Hilbertian spaces and for given Sobolev functions $f,g$  one can define a bilinear object $\nabla f\cdot \nabla g$ which plays the role of  the scalar product of their gradients. This can be done without defining what the gradient of a Sobolev function actually is, as in metric spaces this notion requires more care (see \cite{Gigli12} for more details). Thus, the spirit of the definition is similar to the one that leads to the definition of the carr\'e du champ $\Gamma(f,g)$ in the context of Dirichlet forms. Actually, on infinitesimally Hilbertian spaces the map 
\[
L^2(X,\mm)\cap\s^2(X,\sfd,\mm)\ni f\qquad\mapsto \qquad\int_X\weakgrad f^2\,\d\mm, 
\]
is a regular and strongly local Dirichlet form on $L^2(X,\mm)$, so that the object $\nabla f\cdot\nabla g$ that we are going to define could actually be introduced just as the carr\'e du champ $\Gamma(f,g)$ associated to this Dirichlet form. Despite this, we are going to use a  different definition and a different notation since our structure is richer than the one available when working with  abstract Dirichlet forms, because we have a metric measure space $(X,\sfd,\mm)$ satisfying the assumption \eqref{eq:infhil} and not only a topological space $(X,\tau)$ endowed with a measure $\mm$ and a Dirichlet form $\mathcal E$. This additional feature reflects in several ways in the analysis which we carry out:
\begin{itemize}
\item First and foremost, the proof of the Laplacian comparison estimates is based on tools which are directly related to the geometry of the space (see Lemma \ref{le:horver}) and seems to have nothing to do with semigroup theory.
\item The definition \ref{def:nablafnablag} given below makes sense even on spaces which are not infinitesimally Hilbertian and in this higher generality provides a reasonable definition of what is `the differential of $f$ applied to the gradient of $g$' (see \cite{Gigli12}). In this sense, the approach we propose is more general than the one available in the context of Dirichlet form and formula \eqref{eq:defnablanabla} can be seen as a sort of nonlinear variant of the polarization identity. 
\item Also, the definition of carr\'e du champ requires the introduction of the infinitesimal generator of the form, i.e. the corresponding Laplacian, which at this stage must be interpreted as unbounded operator defined on $L^2$ with values in $L^2$. Yet, in order to state the Laplacian comparison property of the distance function we need to deal with a measure-valued Laplacian. Rather than introducing two different definitions of Laplacian, we first define $\nabla f\cdot \nabla g$ without referring to it and only later speak about measure-valued object.
\end{itemize}
\begin{definition}[The object $\nabla f\cdot\nabla g$]\label{def:nablafnablag}
Let $(X,\sfd,\mm)$ be an infinitesimally Hilbertian space, $\Omega\subseteq X$ an open set and $f,g\in \s^2_{loc}(\Omega)$. The map $\nabla f\cdot\nabla g:\Omega\to\R$ is $\mm$-a.e. defined as
\begin{equation}
\label{eq:defnablanabla}
\nabla f\cdot\nabla g:=\mathop{\rm ess\, inf}_{\eps>0}\frac{\weakgrad{(g+\eps f)}^2-\weakgrad g^2}{2\eps}.
\end{equation}
\end{definition}
Notice that as a direct consequence of the locality stated in \eqref{eq:localweak}, also the object $\nabla f\cdot\nabla g$ is local, i.e.:
\begin{equation}
\label{eq:local}
\nabla f\cdot\nabla g=\nabla \tilde f\cdot\nabla \tilde g,\qquad\mm\text{\rm-a.e.  on}  \ \{f=\tilde f\}\cap\{g=\tilde g\}\cap\Omega.
\end{equation}
In the following theorem we collect the main properties of $\nabla f\cdot\nabla g$, the most relevant and non-trivial ones being its symmetry and the natural chain and Leibniz rules. A  byproduct of these results is that the infinitesimal Hilbertianity condition \eqref{eq:infhil} is equivalent to the localized version 
\[
\weakgrad{(f+g)}^2+\weakgrad{(f-g)}^2=2\Big(\weakgrad f^2+\weakgrad g^2\Big),\qquad\mm\text{\rm-a.e., }\ \forall f,g\in\s^2(X,\sfd,\mm).
\]

\begin{theorem}
Let $(X,\sfd,\mm)$ be infinitesimally Hilbertian and $\Omega\subseteq X$ an open set. 

Then the following hold.
\begin{itemize}
\item\underline{`Cauchy-Schwartz'.} For any $f,g\in\s^2_{loc}(\Omega)$ it holds
\begin{align}
\label{eq:1}
\nabla f\cdot\nabla f&=\weakgrad f^2,\\
\label{eq:boundfg}
\big|\nabla f\cdot\nabla g\big|&\leq \weakgrad f\weakgrad g,
\end{align}
$\mm$-a.e. on $\Omega$.
\item\underline{Linearity in $f$.} For any  $f_1,f_2,g\in \s^2_{loc}(\Omega)$ and $\alpha,\beta\in\R$ it holds
\begin{equation}
\label{eq:linf}
\nabla(\alpha f_1+\beta f_2)\cdot\nabla g=\alpha\nabla f_1\cdot\nabla g+\beta \nabla f_2\cdot\nabla g,\qquad\mm\text{\rm -a.e. on }\Omega.
\end{equation}
\item\underline{Chain rule in $f$.}
Let $f\in\s^2_{loc}(\Omega)$ and $\varphi:\R\to\R$  Lipschitz. Then for any $g\in\s^2_{loc}(\Omega)$ it holds
\begin{equation}
\label{eq:chainf}
\nabla(\varphi\circ f)\cdot\nabla g=\varphi'\circ f\,\nabla f\cdot\nabla g,\qquad\mm\text{\rm-a.e. on }\Omega,
\end{equation}
where the right hand side is taken 0 by definition on those $x\in\Omega$ such that $\varphi$ is not differentiable at $f(x)$. 
\item\underline{Leibniz rule in $f$.}
For $f_1,f_2\in\s^2_{loc}(\Omega)\cap L^{\infty}_{loc}(\Omega)$ and $g\in\s^2_{loc}(\Omega)$ the Leibniz rule
\begin{equation}
\label{eq:leibf}
\nabla (f_1f_2)\cdot\nabla g=f_1\nabla f_2\cdot\nabla g+f_2\nabla f_1\cdot\nabla g,\qquad\mm\text{\rm-a.e. on }\Omega,
\end{equation}
holds. 
\item\underline{Symmetry.} For any $f,g\in\s^2_{loc}(\Omega)$ it holds
\begin{equation}
\label{eq:simm}
\nabla f\cdot\nabla g=\nabla g\cdot\nabla f,\qquad\mm\text{\rm-a.e. on }\Omega.
\end{equation}
In particular, the object $\nabla f\cdot\nabla g$ is also linear in $g$ and satisfies chain and Leibniz rules analogous to those valid for $f$.
\end{itemize}
\end{theorem}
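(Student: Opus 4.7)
My approach rests on two features of the setting: the sublinearity of $\weakgrad{\cdot}$ guaranteed by \eqref{eq:calcrule} makes $\eps\mapsto\weakgrad{(g+\eps f)}^2$ pointwise convex in $\eps$, and the infinitesimal Hilbertianity \eqref{eq:infhil} converts an integrated parallelogram identity into $\mm$-a.e.\ algebraic identities by isolating sign-definite integrands of vanishing integral.

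First I would check that the quotient $\eps\mapsto\frac{\weakgrad{(g+\eps f)}^2-\weakgrad g^2}{2\eps}$ is non-decreasing on $(0,\infty)$ $\mm$-a.e.\ --- a standard property of difference quotients of the convex function $\eps\mapsto\weakgrad{(g+\eps f)}^2$ at $0$ --- so that the essential infimum in \eqref{eq:defnablanabla} coincides with the pointwise limit as $\eps\downarrow 0$. The identity \eqref{eq:1} then follows from $\weakgrad{((1+\eps)g)}^2=(1+\eps)^2\weakgrad g^2$ (an instance of \eqref{eq:chainbase}), and the upper bound in \eqref{eq:boundfg} from the sublinear expansion $\weakgrad{(g+\eps f)}^2\le(\weakgrad g+\eps\weakgrad f)^2$.

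The central step is the odd-symmetry $\nabla(-f)\cdot\nabla g=-\nabla f\cdot\nabla g$ $\mm$-a.e. Define
\[
Q_\eps := \frac{\weakgrad{(g+\eps f)}^2+\weakgrad{(g-\eps f)}^2-2\weakgrad g^2}{2\eps}\qquad(\eps>0).
\]
The map $\eps\mapsto\weakgrad{(g+\eps f)}^2+\weakgrad{(g-\eps f)}^2$ is convex and even, hence attains its minimum at $0$; this gives $Q_\eps\ge 0$ $\mm$-a.e., and $Q_\eps$ is moreover non-decreasing in $\eps$ with $Q_\eps\to\nabla f\cdot\nabla g+\nabla(-f)\cdot\nabla g$ as $\eps\downarrow 0$. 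Restricting first to $f,g\in\s^2(X,\sfd,\mm)$, the $L^2$ parallelogram \eqref{eq:infhil} applied to $g$ and $\eps f$ (using $\weakgrad{\eps f}=\eps\weakgrad f$) yields $\int Q_\eps\,\d\mm=\eps\int\weakgrad f^2\,\d\mm\to 0$. Dominated convergence, with $Q_1\in L^1$ as majorant, forces the pointwise non-negative limit to vanish $\mm$-a.e. Combined with the trivial positive homogeneity $\nabla(\alpha f)\cdot\nabla g=\alpha\nabla f\cdot\nabla g$ for $\alpha>0$ (change of variable $\eps\mapsto\eps/\alpha$), this delivers full $\R$-homogeneity in $f$ and closes the lower Cauchy--Schwartz bound in \eqref{eq:boundfg}. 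Localisation to $\s^2_{loc}(\Omega)$ is routine via cutoffs and \eqref{eq:local}.

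The remaining identities all flow from the pointwise parallelogram $\weakgrad{(u+v)}^2+\weakgrad{(u-v)}^2=2\weakgrad u^2+2\weakgrad v^2$ $\mm$-a.e., whose derivation from the integrated version is the main obstacle. It is obtained by the same mechanism used for the odd-symmetry, now applied to the two-parameter family $\weakgrad{(u+\eps v)}^2$ and exploiting the $\R$-homogeneity already established: one isolates a pointwise non-negative quantity, produced by the convexity of the appropriate one-parameter family of weak upper gradients, whose integrated counterpart vanishes thanks to \eqref{eq:infhil}. Once the pointwise parallelogram is available, the polarisation formula $4\,\nabla f\cdot\nabla g=\weakgrad{(f+g)}^2-\weakgrad{(f-g)}^2$ holds $\mm$-a.e., from which both symmetry \eqref{eq:simm} and additivity \eqref{eq:linf} are immediate. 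The chain rule \eqref{eq:chainf} is then checked first for affine $\varphi$, then for $\varphi(t)=t^2$ via \eqref{eq:chainbase}, extended to polynomials via the Leibniz rule \eqref{eq:leibf} --- itself obtained from $2f_1f_2=(f_1+f_2)^2-f_1^2-f_2^2$ and the $t^2$ case --- and finally to Lipschitz $\varphi$ by uniform polynomial approximation on the essential range of $f$ combined with \eqref{eq:boundfg} and dominated convergence.
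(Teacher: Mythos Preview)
Your argument for the odd-symmetry $\nabla(-f)\cdot\nabla g=-\nabla f\cdot\nabla g$ is correct and essentially matches the paper's (a pointwise non-negative quantity with vanishing integral). The subsequent step --- deriving the \emph{pointwise} parallelogram identity --- is where the proposal breaks down.

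You claim it follows by ``the same mechanism'': a pointwise non-negative quantity with zero integral. For the odd-symmetry the key was that $\eps\mapsto\weakgrad{(g+\eps f)}^2+\weakgrad{(g-\eps f)}^2$ is convex and even, hence minimised at $0$, forcing $Q_\eps\ge0$. For the parallelogram defect
\[
P(u,v):=\weakgrad{(u+v)}^2+\weakgrad{(u-v)}^2-2\weakgrad u^2-2\weakgrad v^2
\]
no such one-sided bound is available. Convexity of $t\mapsto\weakgrad{(u+tv)}^2+\weakgrad{(u-tv)}^2$ only gives $\weakgrad{(u+v)}^2+\weakgrad{(u-v)}^2\ge 2\weakgrad u^2$, not $\ge 2\weakgrad u^2+2\weakgrad v^2$; subadditivity of $\weakgrad\cdot$ gives an upper bound with an extra $4\weakgrad u\weakgrad v$. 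Neither closes. The $\R$-homogeneity you invoke concerns only the \emph{derivative} of $\eps\mapsto\weakgrad{(g+\eps f)}^2$ at $\eps=0$, not its value at $\eps=1$. So $\int P\,\d\mm=0$ does not force $P=0$ $\mm$-a.e., and the polarisation formula you build on it is unjustified.

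The paper avoids this obstacle by a different ordering. Linearity in $f$ comes immediately after odd-symmetry, without any parallelogram: the map $f\mapsto\nabla f\cdot\nabla g$ is positively $1$-homogeneous and $\mm$-a.e.\ convex (from convexity of $\eps\mapsto\weakgrad{(g+\eps f)}^2$), and by odd-symmetry it equals its own concave reflection $-\nabla(-f)\cdot\nabla g$; hence it is linear. Chain and Leibniz rules in $f$ then follow from linearity and locality. Only \emph{after} these are available does the paper attack pointwise symmetry, by showing that $f\mapsto\int h\weakgrad f^2\,\d\mm$ is quadratic for every bounded Lipschitz $h$ --- a computation that uses the Leibniz rule in $f$, an integrated chain rule in $g$, and the integrated symmetry. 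Your route tries to get the pointwise quadratic structure first and derive the calculus rules from it; this inversion does not go through.

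A secondary gap: your chain-rule strategy via polynomial approximation cannot reach arbitrary Lipschitz $\varphi$. Uniform approximation of $\varphi$ by polynomials $\varphi_n$ gives no control of $\varphi_n'-\varphi'$, which is what \eqref{eq:boundfg} and \eqref{eq:chainbase} require to pass to the limit; and for $f\in\s^2_{loc}(\Omega)$ with unbounded essential range neither uniform approximation nor $\varphi_n\circ f\in\s^2_{loc}$ is available. The paper uses piecewise-affine approximants with $\varphi_n'\to\varphi'$ pointwise $f_\sharp\tilde\mm$-a.e., which is precisely the mode of convergence that the estimate $|\nabla(\varphi\circ f)\cdot\nabla g-\nabla(\varphi_n\circ f)\cdot\nabla g|\le|\varphi'-\varphi_n'|\circ f\,\weakgrad f\weakgrad g$ demands.
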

\begin{proof}
All the properties are stated as $\mm$-a.e. equalities on $\Omega$. Hence, by the very definition of $\s^2_{loc}(\Omega)$ and \eqref{eq:local} to conclude it is sufficient to deal with the case of $\Omega=X$ and functions in $\s^2(X,\sfd,\mm)$.

The identity \eqref{eq:1} is a direct consequence of the definition. Taking into account that $\weakgrad{(g+\eps f)}\leq \weakgrad g+\eps\weakgrad f$ for any $\eps>0$, we get
\begin{equation}
\label{eq:lato1}
\nabla f\cdot\nabla g\leq \weakgrad f\weakgrad g,\qquad\mm\textrm{-a.e..}
\end{equation}
From the first inequality in \eqref{eq:calcrule} we get that the map $\s^2(X,\sfd,\mm) \ni f\mapsto\weakgrad{f}$ is $\mm$-a.e. convex, in the sense that
\[
\weakgrad{((1-\lambda)f+\lambda g)}\leq(1-\lambda)\weakgrad f+\lambda\weakgrad g,\qquad\mm\text{\rm-a.e.}\quad \forall f,g\in \s^2(X,\sfd,\mm),\ \lambda\in[0,1].
\]
It follows that $\R\ni\eps\mapsto\weakgrad{(g+\eps f)}$ is also $\mm$-a.e. convex and, being non-negative, also $\R\ni\eps\mapsto\weakgrad{(g+\eps f)}^2/2$ is $\mm$-a.e. convex. In particular, the $\mathop{\rm ess\, inf}_{\eps>0}$ in definition \eqref{eq:defnablanabla} can be substituted with $\lim_{\eps\downarrow0}$ $\mm$-a.e., and thus  we  easily get that for given $g\in \s^2(X,\sfd,\mm)$ we have
\begin{equation}
\label{eq:1conv}
\textrm{the map }\s^2(X,\sfd,\mm)\ni f\quad\mapsto\quad\nabla f\cdot\nabla g\textrm{ is }\mm\textrm{-a.e. positively 1-homogeneous and convex},
\end{equation}
and that
\[
\frac{\weakgrad{(g+\eps f)}^2-\weakgrad g^2}{2\eps}\leq \frac{\weakgrad{(g+\eps' f)}^2-\weakgrad g^2}{2\eps'},\qquad\mm\text{\rm-a.e.}\quad \forall \eps,\eps'\in\R\setminus\{0\},\ \eps\leq \eps',
\]
so that we obtain $\mm$-a.e.
\begin{equation}
\label{eq:nonuguali}
\nabla f\cdot\nabla g=\inf_{\eps>0}\frac{\weakgrad{(g+\eps f)}^2-\weakgrad g^2}{2\eps}\geq\sup_{\eps<0}\frac{\weakgrad{(g+\eps f)}^2-\weakgrad g^2}{2\eps}=-\nabla(- f)\cdot\nabla g.
\end{equation}
Now plug  $\eps f$ in place of $f$ in \eqref{eq:infhil} to get 
\[
\int \frac{\weakgrad{(g+\eps f)}^2-\weakgrad g^2}{2\eps}\,\d\mm=-\int \frac{\weakgrad{(g-\eps f)}^2-\weakgrad g^2}{2\eps}\,\d\mm+\eps\int\weakgrad f^2\,\d\mm.
\]
Letting $\eps\downarrow0$ we obtain $\int \nabla f\cdot\nabla g\,\d\mm=-\int\nabla (-f)\cdot\nabla g\,\d\mm $, which by \eqref{eq:nonuguali} forces 
\begin{equation}
\label{eq:uguali}
\nabla f\cdot\nabla g=-\nabla(-f)\cdot\nabla g,
\end{equation}
which in particular, by \eqref{eq:lato1}, gives \eqref{eq:boundfg}. 

For given $g\in\s^2(X,\sfd,\mm)$, \eqref{eq:1conv} yields that $f\mapsto -\nabla (-f)\cdot \nabla g $ is $\mm$-a.e. positively 1-homogeneous and concave, hence from \eqref{eq:uguali} we deduce the linearity in $f$ of $\nabla f\cdot\nabla g$, i.e. \eqref{eq:linf} is proved.

We now turn to the chain rule in \eqref{eq:chainf}.  Notice that the linearity in $f$ and the inequality \eqref{eq:boundfg} immediately yield
\begin{equation}
\label{eq:1lip}
\big|\nabla f\cdot\nabla g-\nabla \tilde f\cdot\nabla g\big|\leq \weakgrad{(f-\tilde f)}\weakgrad{g}.
\end{equation}
Moreover, thanks to \eqref{eq:linf}, \eqref{eq:chainf} is obvious if  $\varphi$ is linear, and since \eqref{eq:chainf} is unchanged if we add a constant to $\varphi$, it is also true if $\varphi$ is affine. Then, using the locality property \eqref{eq:local}  we also get \eqref{eq:chainf} for $\varphi$ piecewise affine (notice that to deal with the negligible points of non-differentiability of $\varphi$ we are using \eqref{eq:nullset} and the fact that the right hand side of \eqref{eq:chainf} is taken 0 by definition at those $x$'s such that $\varphi$ is not differentiable at $f(x)$). To conclude in the general case, let $\tilde\mm\in\prob X$ be any measure having the  same negligible sets of $\mm$ (use the Lindel\"of property of $(X,\sfd)$ to find such $\tilde\mm$) and observe that the measure $f_\sharp\tilde\mm$ on $\R$ is a Borel probability measure, and hence is Radon. From this fact it is easy to check that for general Lipschitz $\varphi$ we can find a sequence  $(\varphi_n)$ of piecewise affine functions such that $\varphi_n'(z)\to\varphi'(z)$ as $n\to\infty$ for $f_\sharp\tilde\mm$-a.e. $z\in\R$. By the choice of $\tilde\mm$ we then get that $\mm$-a.e. it holds $(\varphi'-\varphi_n')\circ f\to0$ as $n\to\infty$.  Thus using \eqref{eq:1lip} and \eqref{eq:chainbase} we deduce that 
\[
\big|\nabla (\varphi\circ f)\cdot\nabla g-\nabla (\varphi_n\circ f)\cdot\nabla g\big|\leq\weakgrad{((\varphi-\varphi_n)\circ f )}\weakgrad g= |\varphi'-\varphi_n'|\circ f\weakgrad f\weakgrad g\to 0,
\]
$\mm$-a.e., as desired.

The Leibniz rule \eqref{eq:leibf} is a consequence of the chain rule  \eqref{eq:chainf} and the linearity \eqref{eq:linf}: indeed, up to adding a constant to both $f_1$ and $f_2$, we can assume that $\mm$-a.e. it holds $f_1,f_2\geq c$ for some $c>0$, then notice that from \eqref{eq:chainf} and \eqref{eq:linf} we get
\[
\begin{split}
\nabla (f_1f_2)\cdot\nabla g&=f_1f_2\nabla(\log(f_1f_2))\cdot\nabla g=f_1f_2\nabla(\log f_1+\log f_2)\cdot\nabla g\\
&=f_1f_2\big(\nabla(\log f_1)\cdot\nabla g+\nabla(\log f_2)\cdot\nabla g\big)=f_1f_2\left(\frac1{f_1}\nabla f_1\cdot\nabla g+\frac1{f_2}\nabla f_2\cdot\nabla g\right)\\
&=f_2\nabla f_1\cdot\nabla g+f_1\nabla f_2\cdot\nabla g.
\end{split}
\]
To conclude it is now sufficient to show the symmetry relation \eqref{eq:simm}. For this we shall need some auxiliary intermediate results. The first one concerns continuity in $g$ of the map $\s^2(X,\sfd,\mm)\ni g\mapsto\int \nabla f\cdot\nabla g\,\d\mm$. More precisely, we claim that
\begin{equation}
\label{eq:contg}
\begin{split}
&\text{\rm given a sequence $(g_n)\subset \s^2(X,\sfd,\mm)$ and $g\in\s^2(X,\sfd,\mm)$ such that}\\
&\text{\rm $\lim_{n\to\infty}\int\weakgrad{(g_n-g)}^2\,\d\mm=0$, for any $f\in\s^2(X,\sfd,\mm)$ it holds}\\
&\lim_{n\to\infty}\int\nabla f\cdot\nabla g_n\,\d\mm=\int\nabla f\cdot\nabla g\,\d\mm.
\end{split}\end{equation}
To see this, notice that for any $\eps\neq 0$ and under the same assumptions it holds
\[
\lim_{n\to\infty}\int \frac{\weakgrad{(g_n+\eps f)}^2-\weakgrad{g_n}^2}{\eps}\,\d\mm=\int \frac{\weakgrad{(g+\eps f)}^2-\weakgrad{g}^2}{\eps}\,\d\mm.
\]
Now recall that $\R^+\ni\eps\mapsto\frac{\weakgrad{(g_n+\eps f)}^2-\weakgrad{g_n}^2}{\eps}$ is $\mm$-a.e. increasing and converges to $\nabla f\cdot\nabla g_n$ $\mm$-a.e. to get 
\[
\lims_{n\to\infty}\int\nabla f\cdot\nabla g_n\,\d\mm\leq\lim_{n\to\infty}\int \frac{\weakgrad{(g_n+\eps f)}^2-\weakgrad{g_n}^2}{\eps}\,\d\mm=\int \frac{\weakgrad{(g+\eps f)}^2-\weakgrad{g}^2}{\eps}\,\d\mm,
\]
and eventually passing to the limit as $\eps\downarrow 0$ we deduce
\begin{equation}
\label{eq:limsg}
\lims_{n\to\infty}\int\nabla f\cdot\nabla g_n\,\d\mm\leq\int\nabla f\cdot\nabla g\,\d\mm.
\end{equation}
To get the $\limi$ inequality, just  notice that directly from the definition and the linearity in $f$ expressed in \eqref{eq:linf} we obtain $\nabla f\cdot\nabla(-g_n)=-\nabla f\cdot\nabla g_n$. Hence applying \eqref{eq:limsg} with $g_n,g$ replaced by $-g_n,-g$ gives \eqref{eq:contg}.

We shall use \eqref{eq:contg} to obtain an integrated chain rule for $g$, i.e.:
\begin{equation}
\label{eq:chaingint}
\int \varphi'\circ g\nabla f\cdot\nabla g\,\d\mm=\int\nabla f\cdot\nabla(\varphi\circ g)\,\d\mm.
\end{equation}
To get this, start observing that letting $\eps\downarrow 0$ in the trivial identity
\[
\frac{\weakgrad{(\alpha g+\eps f)}^2-\weakgrad{(\alpha g)}^2}{2\eps}=\alpha\frac{\weakgrad{(g+\frac{\eps}\alpha f)}^2-\weakgrad{ g}^2}{2\frac\eps\alpha},\qquad\alpha\neq 0,
\]
and recalling the linearity in $f$ \eqref{eq:linf}, we obtain 1-homogeneity in $g$, i.e.
\begin{equation}
\label{eq:homg}
\nabla f\cdot\nabla (\alpha g)=\alpha\nabla f\cdot\nabla g,\qquad\forall\alpha\in\R.
\end{equation}
From the locality property \eqref{eq:local} we then get that for $\varphi:\R\to\R$ piecewise affine it holds
\begin{equation}
\label{eq:quasichain}
\nabla f\cdot\nabla(\varphi\circ g)=\varphi'\circ g\nabla f\cdot\nabla g,\qquad\mm\text{\rm-a.e.},
\end{equation}
where we are defining the right hand side as 0 at those $x$ such that $\varphi$ is not differentiable in $g(x)$. To conclude we argue as in the proof of \eqref{eq:chainf} using \eqref{eq:contg} in place of \eqref{eq:1lip}. More precisely, given $\varphi:\R\to\R$ Lipschitz we find a sequence $(\varphi_n)$ of uniformly Lipschitz piecewise affine functions such that $\varphi'_n(z)\to\varphi'(z)$ for $g_\sharp\tilde\mm$-a.e. $z$ ($\tilde\mm$ being as before a probability measure on $X$ having the same negligible sets as $\mm$).

From $\weakgrad{(\varphi\circ g-\varphi_n\circ g)}=|\varphi'-\varphi'_n|\circ g\weakgrad g\to 0$ $\mm$-a.e. and the fact that $\varphi,\varphi_n$, $n\in\N$, are uniformly Lipschitz we get $\lim_{n\to\infty}\int\weakgrad{(\varphi\circ g-\varphi_n\circ g)}^2\,\d\mm\to 0$. Thus from \eqref{eq:contg} and \eqref{eq:quasichain} we conclude
\[
\begin{split}
\int\nabla f\cdot\nabla (\varphi\circ g)&=\lim_{n\to\infty}\int\nabla f\cdot\nabla(\varphi_n\circ g)\,\d\mm\\
&=\lim_{n\to\infty}\int\varphi_n'\circ g\nabla f\cdot\nabla g\,\d\mm\\
&=\int\varphi'\circ g\nabla f\cdot\nabla g\,\d\mm,
\end{split}
\]
having used the dominate convergence theorem in the last step.

The last ingredient we need to prove the symmetry property \eqref{eq:simm} is its integrated version
\begin{equation}
\label{eq:simmint}
\int\nabla f\cdot\nabla g\,\d\mm=\int\nabla g\cdot\nabla f\,\d\mm.
\end{equation}
This easily follows by noticing that the assumption of infinitesimal Hilbertianity yields
\begin{equation}
\label{eq:camel}
\int\frac{\weakgrad{(g+\eps f)}^2-\weakgrad{g}^2}{\eps}-\eps\weakgrad f^2\,\d\mm=\int\frac{\weakgrad{(f+\eps g)}^2-\weakgrad{f}^2}{\eps}-\eps\weakgrad g^2\,\d\mm,
\end{equation}
and letting $\eps\downarrow 0$.

Now notice that \eqref{eq:simm} is equivalent to the fact that for any $h\in L^\infty(X,\mm)$ it holds 
\begin{equation}
\label{eq:trenino}
\int h\nabla f\cdot\nabla g\,\d\mm=\int h\nabla g\cdot\nabla f\,\d\mm,\qquad\forall f,g\in\s^2(X,\sfd,\mm).
\end{equation}
Taking into account the weak$^*$-density of Lipschitz and bounded functions in $L^\infty(X,\mm)$, we easily see that it is sufficient to check \eqref{eq:trenino} for any $h$ Lipschitz and bounded. Also, with the same arguments that led from \eqref{eq:camel} to \eqref{eq:simmint} and a simple truncation argument,  \eqref{eq:trenino} will follow if we show that 
\begin{equation}
\label{eq:simm2}
\s^2(X,\sfd,\mm)\cap L^\infty(X,\mm)\ni f\qquad\mapsto\qquad \int h\weakgrad f^2\,\d\mm\qquad\textrm{is a quadratic form}.
\end{equation}
To this aim, notice that from \eqref{eq:leibf}, \eqref{eq:chaingint} and \eqref{eq:simmint} we get
\begin{equation}
\label{eq:leibsimm}
\begin{split}
\int h\weakgrad f^2\,\d\mm&=\int \nabla (fh)\cdot\nabla f-f\nabla h\cdot\nabla f\,\d\mm\\
&=\int \nabla (fh)\cdot\nabla f-\nabla h\cdot\nabla \big(\frac{f^2}2\big)\,\d\mm=\int \nabla (fh)\cdot\nabla f-\nabla \big(\frac{f^2}2\big)\cdot \nabla h\,\d\mm
\end{split}
\end{equation}
By \eqref{eq:linf} and \eqref{eq:simmint} we know that both $f\mapsto \int \nabla (fh)\cdot\nabla\tilde f\,\d\mm$ and $f\mapsto\int\nabla (\tilde fh)\cdot\nabla f\,\d\mm$ are linear maps, hence $f\mapsto\int \nabla (fh)\cdot\nabla f\,\d\mm$ is a quadratic form. Again by \eqref{eq:linf} we also get that $f\mapsto \int \nabla(\frac {f^2}2)\cdot \nabla h\,\d\mm$ is a quadratic form. Hence \eqref{eq:leibsimm} yields \eqref{eq:simm2} and the conclusion.
\end{proof}
Notice that as a direct byproduct of the proof we get that $\nabla f\cdot \nabla g$ can be realized as limit rather than as infimum, i.e. it holds
\begin{equation}
\label{eq:limite}
\nabla f\cdot\nabla g=\lim_{\eps\to 0}\frac{\weakgrad{(g+\eps f)}^2-\weakgrad g^2}{2\eps},\qquad\forall f,g\in\s^2(\Omega),
\end{equation}
the limit being intended both in $L^2(\Omega)$ and in the pointwise $\mm$-a.e. sense.
\subsection{Definition and basic properties of the Laplacian}
In this section we shall assume that $(X,\sfd,\mm)$ is such that
\begin{equation}
\label{eq:mmslap}
\textrm{$(X,\sfd,\mm)$ is infinitesimally Hilbertian and $(X,\sfd)$ is proper}.
\end{equation}
Given $\Omega\subseteq X$ open, we will denote by  $\test\Omega$ the set of all Lipschitz functions compactly supported in $\Omega$.
\begin{definition}[Laplacian] Let $(X,\sfd,\mm)$ as in \eqref{eq:mmslap} and $\Omega\subseteq X$ open.
Let $g:\Omega\to\R$ be a locally Lipschitz function. We say that $g$ has a distributional Laplacian in $\Omega$, and write $g\in D(\bd,\Omega)$, provided there exists a locally finite Borel measure $\mu$ on $\Omega$ such that
\begin{equation}
\label{eq:deflap}
-\int\nabla f\cdot\nabla g\,\d\mm=\int f\,\d\mu,\qquad\forall f\in\test\Omega.
\end{equation}
In this case we will say that $\mu$ (which is clearly unique) is the distributional Laplacian of $g$ and  indicate it by $\bd g\restr\Omega$. 
\end{definition}
Notice that the integrand in the left hand side of \eqref{eq:deflap} is in $L^1(\Omega)$, because $g$, being locally Lipschitz, is Lipschitz on $\supp(f)$. Also, since the left hand side of \eqref{eq:deflap} is linear in $g$, the set $D(\bd,\Omega)$ is a vector space and the map
\[
D(\bd,\Omega)\ni g\qquad\mapsto\qquad\bd g\restr\Omega\,,
\]
is linear. We now show that the basic calculus rules for the Laplacian are true also in this generality.
\begin{proposition}[Chain rule]\label{prop:chainlap}
Let $(X,\sfd,\mm)$ be as in \eqref{eq:mmslap},  $\Omega\subseteq X$  an open set and $g:\Omega\to\R$ a locally Lipschitz function in $D(\bd,\Omega)$. Then for every  function $\varphi\in C^{1,1}_{loc}(g(\Omega))$, the function $\varphi\circ g$ is in $D(\bd,\Omega)$ and it holds  
\begin{equation}
\label{eq:chainlap}
\bd(\varphi\circ g)\restr\Omega=\varphi'\circ g\bd g\restr\Omega+\varphi''\circ g\weakgrad g^2\mm\restr\Omega.
\end{equation}
\end{proposition}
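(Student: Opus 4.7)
The plan is to test the putative identity against an arbitrary $f\in\test\Omega$ and manipulate $-\int\nabla f\cdot\nabla(\varphi\circ g)\,\d\mm$ into the desired form via two applications of the chain rule \eqref{eq:chainf} separated by a Leibniz rule \eqref{eq:leibf}. Since $\varphi\in C^{1,1}_{loc}(g(\Omega))$ both $\varphi$ and $\varphi'$ are locally Lipschitz on $g(\Omega)$, so $\varphi\circ g$ and $\varphi'\circ g$ are locally Lipschitz on $\Omega$; in particular both are in $\s^2_{loc}(\Omega)$ and thus the two objects $\nabla f\cdot\nabla(\varphi\circ g)$ and $\nabla(\varphi'\circ g)\cdot\nabla g$ are well defined.

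The crucial bookkeeping is that for $f\in\test\Omega$ the product $f\,(\varphi'\circ g)$ still belongs to $\test\Omega$: its support is contained in the compact set $\supp(f)\subset\Omega$, while $g(\supp(f))$ is a compact subset of $g(\Omega)$ on which $\varphi'$ is Lipschitz, making $\varphi'\circ g$ Lipschitz on $\supp(f)$ and hence $f\,(\varphi'\circ g)$ a legitimate test function. With this in hand, the chain rule \eqref{eq:chainf} gives $\mm$-a.e. on $\Omega$
\[
\nabla f\cdot\nabla(\varphi\circ g)=(\varphi'\circ g)\,\nabla f\cdot\nabla g,
\]
the Leibniz rule \eqref{eq:leibf} in the first argument, applied to the locally bounded, locally Lipschitz function $\varphi'\circ g$, rewrites the right-hand side as $\nabla(f(\varphi'\circ g))\cdot\nabla g-f\,\nabla(\varphi'\circ g)\cdot\nabla g$, and a second use of \eqref{eq:chainf} (applied to $\varphi'$ and $g$) together with \eqref{eq:1} yields
\[
\nabla(\varphi'\circ g)\cdot\nabla g=(\varphi''\circ g)\weakgrad g^2,
\]
with the right-hand side understood to vanish on the $\mm$-negligible set where $\varphi'$ fails to be differentiable at $g(x)$, thanks to \eqref{eq:nullset} and the convention in \eqref{eq:chainf}.

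Integrating against $\mm$ and invoking the definition of $\bd g\restr\Omega$ with the test function $f(\varphi'\circ g)$ produces
\[
-\int\nabla f\cdot\nabla(\varphi\circ g)\,\d\mm=\int f(\varphi'\circ g)\,\d\bd g\restr\Omega+\int f(\varphi''\circ g)\weakgrad g^2\,\d\mm,
\]
which is precisely \eqref{eq:chainlap}. To finish, one verifies that the right-hand side is a locally finite Borel measure on $\Omega$: on any compact $K\subset\Omega$ the image $g(K)$ is compact in $g(\Omega)$, so $\varphi',\varphi''$ are bounded there, $\weakgrad g$ is bounded on $K$ (since $g$ is locally Lipschitz, using \eqref{eq:weaklip} together with \eqref{eq:cheeger2} is not even needed, the direct bound $\weakgrad g\leq\Lip(g\restr{K'})$ on a slightly larger $K'$ suffices), and $\bd g\restr\Omega$ is locally finite by hypothesis.

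There is no real analytic obstacle; the argument is essentially bookkeeping for calculus rules that have already been proved. The one genuinely delicate point is the assumption $\varphi\in C^{1,1}_{loc}$, which is what is needed to guarantee both that $\varphi'\circ g$ is locally Lipschitz (so that $f(\varphi'\circ g)\in\test\Omega$ and the Leibniz rule applies in the required form) and that the second-order term $\varphi''\circ g$ is locally bounded, making $\varphi''\circ g\,\weakgrad g^2\,\mm\restr\Omega$ locally finite. Weaker regularity on $\varphi$ (e.g.\ merely $C^{1}$) would jeopardize both of these conclusions and is why the statement is phrased with $C^{1,1}_{loc}$.
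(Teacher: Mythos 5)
Your proof is correct and follows essentially the same route as the paper's: you observe that $f\,(\varphi'\circ g)\in\test\Omega$, use the chain rule to pull $\varphi'\circ g$ out of $\nabla f\cdot\nabla(\varphi\circ g)$, apply the Leibniz rule \eqref{eq:leibf} to shift $\varphi'\circ g$ into the test function, apply the chain rule once more (with \eqref{eq:1}) to identify $\nabla(\varphi'\circ g)\cdot\nabla g$ with $(\varphi''\circ g)\weakgrad g^2$, and integrate against $\mm$. The remarks on local finiteness of the right-hand side and on why $C^{1,1}_{loc}$ regularity is exactly what is needed also match the paper's reasoning.
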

\begin{proof} The right hand side of \eqref{eq:chainlap} defines a locally finite measure, so the statement makes sense.

Let $f\in\test\Omega$ and notice that being $\varphi'\circ g$ locally Lipschitz, we also have  $f\varphi'\circ g\in\test\Omega$. With a simple locality argument  one can check that the chain rule \eqref{eq:chainf} and the symmetry \eqref{eq:simm} yield \[
\begin{split}
\nabla f\cdot\nabla(\varphi\circ g)&=\varphi'\circ g\nabla f\cdot\nabla g,\\
\nabla(\varphi'\circ g)\nabla g&=\varphi''\circ g\nabla g\cdot\nabla g,
\end{split}
\] 
$\mm$-a.e. on $\Omega$. Hence, taking into account  the Leibniz rule \eqref{eq:leibf} and the identity \eqref{eq:boundfg}, we have
\[
\begin{split}
-\int \nabla f\cdot\nabla(\varphi\circ g)\,\d\mm&=-\int \varphi'\circ g\nabla f\cdot\nabla g\,\d\mm\\
&=-\int\nabla(f\varphi'\circ g)\cdot\nabla g-f\nabla(\varphi'\circ g)\nabla g\,\d\mm\\
&=\int f\varphi'\circ g\,\d\bd g\restr\Omega+\int f\varphi''\circ g\weakgrad g^2\,\d\mm,
\end{split}
\]
which is the thesis.
\end{proof}
\begin{proposition}[Leibniz rule]
Let $(X,\sfd,\mm)$ be as in \eqref{eq:mmslap},  $\Omega\subseteq X$  an open set  and $g_1,g_2\in D(\bd,\Omega)$. Then $g_1g_2\in D(\bd,\Omega)$ and
\begin{equation}
\label{eq:leiblap}
\bd(g_1g_2)\restr\Omega=g_1\bd g_2\restr\Omega+g_2\bd g_1\restr\Omega+2\nabla g_1\cdot\nabla g_2\mm\restr\Omega.
\end{equation}
\end{proposition}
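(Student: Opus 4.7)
The plan is to reduce the identity to the Leibniz rule \eqref{eq:leibf} for $\nabla\cdot\nabla$ together with the definition \eqref{eq:deflap} of the distributional Laplacian, using $fg_1$ and $fg_2$ as auxiliary test functions.

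First I would verify that the statement makes sense. Since $g_1,g_2$ are locally Lipschitz on $\Omega$ (being in $D(\bd,\Omega)$), their product $g_1g_2$ is locally Lipschitz; and the right hand side defines a locally finite Borel measure on $\Omega$, because $g_1,g_2$ are locally bounded, $\bd g_i\restr\Omega$ are locally finite, and $\nabla g_1\cdot\nabla g_2\in L^1_{loc}(\Omega,\mm)$ thanks to the bound $|\nabla g_1\cdot\nabla g_2|\leq\weakgrad{g_1}\weakgrad{g_2}\leq|Dg_1||Dg_2|$ from \eqref{eq:boundfg} and \eqref{eq:weaklip}, with the local Lipschitz constants being locally bounded.

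Next, fix $f\in\test\Omega$. The crucial observation is that $fg_1,fg_2\in\test\Omega$: indeed, $f$ has compact support $K\subset\Omega$, $g_i$ is Lipschitz on a neighborhood of $K$, so $fg_i$ is Lipschitz on $X$ with compact support contained in $\Omega$. Applying the Leibniz rule \eqref{eq:leibf} (together with the symmetry \eqref{eq:simm}) gives, $\mm$-a.e. on $\Omega$,
\[
\nabla f\cdot\nabla(g_1g_2)=g_1\,\nabla f\cdot\nabla g_2+g_2\,\nabla f\cdot\nabla g_1,
\]
and, applied again in the opposite direction,
\[
g_1\,\nabla f\cdot\nabla g_2=\nabla(fg_1)\cdot\nabla g_2-f\,\nabla g_1\cdot\nabla g_2,\qquad g_2\,\nabla f\cdot\nabla g_1=\nabla(fg_2)\cdot\nabla g_1-f\,\nabla g_1\cdot\nabla g_2.
\]
Integrating against $\mm$ and using the defining relation \eqref{eq:deflap} for $\bd g_1\restr\Omega$ and $\bd g_2\restr\Omega$ with the admissible test functions $fg_1,fg_2$ yields
\[
\int \nabla(fg_1)\cdot\nabla g_2\,\d\mm=-\int fg_1\,\d\bd g_2\restr\Omega,\qquad \int \nabla(fg_2)\cdot\nabla g_1\,\d\mm=-\int fg_2\,\d\bd g_1\restr\Omega.
\]
Summing the two identities and using the previous displays, one obtains
\[
-\int\nabla f\cdot\nabla(g_1g_2)\,\d\mm=\int f\,\d\big(g_1\bd g_2\restr\Omega+g_2\bd g_1\restr\Omega+2\nabla g_1\cdot\nabla g_2\,\mm\restr\Omega\big),
\]
which is exactly the Leibniz rule \eqref{eq:leiblap}.

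There is no serious obstacle here: the argument is essentially an integration by parts, and the only point that deserves attention is the observation that $fg_i\in\test\Omega$, which is what legitimizes the use of the definition of distributional Laplacian for $g_i$ against these modified test functions.
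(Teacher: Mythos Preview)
Your proof is correct and follows essentially the same approach as the paper: both apply the Leibniz rule \eqref{eq:leibf} together with symmetry \eqref{eq:simm} to rewrite $\nabla f\cdot\nabla(g_1g_2)$ in terms of $\nabla(fg_i)\cdot\nabla g_j$ and $f\,\nabla g_1\cdot\nabla g_2$, and then use the definition of the distributional Laplacian with the test functions $fg_1,fg_2\in\test\Omega$. Your write-up is slightly more detailed in justifying well-posedness and the membership $fg_i\in\test\Omega$, but the argument is the same.
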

\begin{proof}
The right hand side of \eqref{eq:leiblap} defines a locally finite measure, so the statement makes sense. For $f\in\test\Omega$ we have $fg_1,fg_2\in\test\Omega$, hence using the Leibniz rule \eqref{eq:leibf} and the symmetry \eqref{eq:simm}  we get
\[
\begin{split}
-\int\nabla f\cdot\nabla(g_1g_2)\,\d\mm&=-\int g_1\nabla f\cdot\nabla g_2+g_2\nabla f\cdot\nabla g_1\,\d\mm\\
&=-\int \nabla(fg_1)\cdot\nabla g_2+ \nabla(fg_2)\cdot\nabla g_1-2f\nabla g_1\cdot\nabla g_2\,\d\mm\\
&=\int fg_1\,\d\bd g_2\restr\Omega+\int fg_2\,\d\bd g_1\restr\Omega+\int2f\nabla g_1\cdot\nabla g_2\,\d\mm,
\end{split}
\]
which is the thesis.
\end{proof}
\begin{proposition}[Comparison]\label{prop:compar}
Let $(X,\sfd,\mm)$ be as in \eqref{eq:mmslap},  $\Omega\subseteq X$  an open set, $g:\Omega\to\R$ locally Lipschitz and assume that there exists a locally finite measure $\mu$ on $\Omega$ such that
\[
-\int\nabla f\cdot\nabla g\,\d\mm\leq \int f\,\d\mu,\qquad\forall f\in\test\Omega,\ f\geq 0.
\]
Then $g\in D(\bd,\Omega)$ and $\bd g\restr\Omega\leq \mu$.
\end{proposition}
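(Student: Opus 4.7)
The plan is to observe that the hypothesis makes
\[
L(f):=\int f\,\d\mu+\int\nabla f\cdot\nabla g\,\d\mm
\]
a positive linear functional on $\test\Omega$, and then to represent $L$ via the Riesz representation theorem as integration against a non-negative locally finite Borel measure $\nu$. Setting $\bd g\restr\Omega:=\mu-\nu$ will simultaneously give $g\in D(\bd,\Omega)$ and $\bd g\restr\Omega\leq\mu$.

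\emph{Step 1 (well-definition, linearity, positivity).} For $f\in\test\Omega$ the function $g$ is Lipschitz on $\supp f$, so $\weakgrad g\in L^\infty(\supp f)$ and $|\nabla f\cdot\nabla g|\leq\weakgrad f\,\weakgrad g$ is integrable by \eqref{eq:boundfg}; local finiteness of $\mu$ then makes $L(f)$ finite. Linearity of $L$ in $f$ is exactly \eqref{eq:linf}, and $L\geq 0$ on non-negative $f$ is the hypothesis.

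\emph{Step 2 (continuity via cut-offs).} For any compact $K\subset\Omega$, properness of $(X,\sfd)$ lets me pick an open $U$ with $K\subset U\Subset\Omega$ and build a Lipschitz cut-off $\psi_K\in\test\Omega$ with $0\leq\psi_K\leq 1$ and $\psi_K\equiv 1$ on $K$ (e.g. as a truncation of the distance to $X\setminus U$). For any $f\in\test\Omega$ with $\supp f\subset K$, the functions $\|f\|_\infty\psi_K\pm f$ lie in $\test\Omega$ and are non-negative on $\Omega$; applying $L$ to both yields the uniform bound
\[
|L(f)|\leq \|f\|_\infty L(\psi_K).
\]
This says $L$ is continuous on the space of Lipschitz functions with support in $K$ equipped with the sup norm.

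\emph{Step 3 (Riesz representation).} Lipschitz functions supported in $U$ are sup-norm dense in $C_c(U)$ (for instance by McShane-type extensions of mollified versions on $K$, or by approximating $\varphi\in C_c(U)$ uniformly on the compact $\overline U$), so $L$ extends by continuity to a positive linear functional on $C_c(\Omega)$. The Riesz representation theorem then yields a unique non-negative locally finite Borel measure $\nu$ on $\Omega$ with $L(f)=\int f\,\d\nu$ for every $f\in C_c(\Omega)$. Restricting to $f\in\test\Omega$ and rearranging gives $-\int\nabla f\cdot\nabla g\,\d\mm=\int f\,\d(\mu-\nu)$, so $g\in D(\bd,\Omega)$ with $\bd g\restr\Omega=\mu-\nu\leq \mu$.

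The only mildly delicate ingredient is the uniform continuity estimate in Step 2, but positivity of $L$ makes it essentially free; the remainder is a standard Riesz argument adapted to the test class $\test\Omega$, so I do not anticipate a real obstacle.
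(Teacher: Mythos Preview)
Your proof is correct and follows essentially the same route as the paper: define $L(f)=\int f\,\d\mu+\int\nabla f\cdot\nabla g\,\d\mm$, use positivity together with a cut-off $\psi_K$ to get the sup-norm bound $|L(f)|\leq\|f\|_\infty L(\psi_K)$ on functions supported in $K$, extend by density and apply Riesz to represent $L$ by a non-negative Radon measure. The paper's argument is identical up to notation (it writes the representing measure as $\tilde\mu$ rather than $\nu$, and phrases the extension compact-by-compact rather than globally on $C_c(\Omega)$).
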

\begin{proof}
The map
\[
\test\Omega\ni f\qquad\mapsto L(f):= \int f\,\d\mu+\int\nabla f\cdot\nabla g\,\d\mm,
\]
is linear and satisfies $L(f)\geq 0$ for $f\geq 0$. To get the proof, it is sufficient to show that there exists a non-negative Radon measure $\tilde\mu$ on $\Omega$ such that $L(f)=\int f\,\d\tilde\mu$ for any $f\in \test\Omega$. 

To this aim, fix a compact set $K\subset\Omega$ and a  function $\nchi_K\in \test\Omega$ such that $0\leq\nchi_K\leq 1$ everywhere and $\nchi_K= 1$ on $K$. Let $V_K\subset \test\Omega$ be the set of Lipschitz  functions with support contained in $K$ and observe that for any  $f\in V_K$, the fact that $(\max |f|)\nchi_K-f$ is in $\test\Omega$ and non-negative yields
\[
\begin{split}
0\leq L\big((\max|f|)\nchi -f\big)=(\max|f|)L(\nchi)-L(f).
\end{split}
\]
Replacing $f$ with $-f$ we deduce
\[
|L(f)|\leq (\max|f|) L(\nchi),
\]
i.e. $L:V_K\to\R$ is continuous w.r.t. the $\sup$ norm. Hence it can be extended to a (unique, by the density of Lipschitz functions in the uniform norm) linear bounded functional on the set $C_K\subset C(X)$ of continuous functions with support contained in $K$. Since $K$ was arbitrary, by the Riesz theorem we get that there exists a locally finite Borel measure $\tilde\mu$ such that
\[
L(f)=\int f\,\d\tilde\mu,\qquad\forall f\in \test\Omega.
\]
Clearly $\tilde\mu$ is non-negative, thus the thesis is achieved.
\end{proof}
\subsection{Proof of Laplacian comparison}
In this section we present a simplified proof of the Laplacian comparison estimates for the distance on infinitesimally Hilbertian $CD(K,N)$ spaces.

Before going into technicalities we outline the strategy in the Riemannian setting, the point being that we can only rely on the $CD(K,N)$ condition and not on Jacobi fields calculus nor on the Bochner inequality, which as of today are not available in the non-smooth setting. Consider for simplicity $K=0$, so that our data is a Riemannian manifold $M$ such that for any couple of probability measures $\mu_0,\mu_1$ it holds
\[
\u_N(\mu_t)\leq (1-t)\u_N(\mu_0)+t\u_N(\mu_1),
\] 
where $(\mu_t)$ is any $W_2$-geodesic connecting $\mu_0$ to $\mu_1$, and the reference measure used to compute $\u_N$ is the volume measure ${\rm vol}$. Let $\overline x\in M$ be a point, $\rho$ a smooth probability density and apply the previous inequality with $\mu_0:=\rho{\rm vol}$ and $\mu_1=\delta_{\overline x}$ to get
\begin{equation}
\label{eq:11}
\lim_{t\to 0}\frac{\u_N(\mu_t)-\u_N(\mu_0)}{t}\leq -\u_N(\mu_0)=\int\rho^{1-\frac1N}\,\d{\rm vol},
\end{equation}
having used the fact that $\u_N(\delta_{\overline x})=0$. On the other hand, letting $\sfd$ be the Riemannian distance and $g(x):=\frac{1}2\sfd^2(x,\overline x)$, a direct explicit computation based on the fact that $\mu_t=\exp(-t\nabla g)_\sharp\mu_0$  gives
\begin{equation}
\label{eq:21}
\lim_{t\to 0}\frac{\u_N(\mu_t)-\u_N(\mu_0)}{t}=-\frac1N\int\nabla \rho^{1-\frac1N}\cdot\nabla g\,\d{\rm vol}.
\end{equation}
Combining \eqref{eq:11} and \eqref{eq:21} yields, thanks to arbitrariness of $\rho$, the sharp Laplacian comparison $\Delta g\leq N{\rm vol}$.

\bigskip

We will follow this argument in proving the Laplacian comparison for the non-smooth case. Notice that while inequality \eqref{eq:11} can  certainly be proved in the same way in the abstract situation, for what concerns the derivative computed in \eqref{eq:21} things are more difficult due to the lack of a change of variable formula. 

The crucial technical fact that we will use is the following lemma. Its statement is  a non-smooth analogous of the fact that if we differentiate a smooth function $f$ along a geodesic $(\gamma_t)$ connecting  a point $x\in M$ to the reference point $\overline x\in M$ we obtain $\nabla f\cdot \nabla g$, with $g:=\frac12\sfd^2(\cdot,\overline x)$ as above (say that we are far from the cut-locus). In this sense it relates the `horizontal' derivative $\lim_{t\to 0}\frac{f(\gamma_t)-f(x)}{t}$ obtained by perturbing $f$ in the independent variable with the `vertical' derivative $\lim_{\eps\to 0}\frac{|\nabla(g+\eps f)|^2(x)-|\nabla g|^2(x)}{2\eps}$ obtained by perturbing $g$ in the dependent variable that we used in Section \ref{se:infhil} to define the object $\nabla f\cdot \nabla g$.

We also remark that the expression \eqref{eq:horver} can be seen as an infinitesimal version of the change of variable formula.
\begin{lemma}[Horizontal and vertical derivatives]\label{le:horver}
Let $(X,\sfd,\mm)$ be an infinitesimally Hilbertian $CD(K,N)$ space, $K\in\R$, $N\in(1,\infty)$, and $\Omega\subseteq X$ an open set. Let $\bar x\in \Omega$, $\mu=\rho\mm\in\probt X$ a measure with bounded density and bounded support concentrated on $\Omega$ and let  $\ppi\in\gopt(\mu,\delta_{\bar x})$ be a plan such that \eqref{eq:cd} holds and $(\e_t)_\sharp\ppi$ is concentrated on $\Omega$ for every $t\in[0,1]$.

Then for every Lipschitz function $f:\Omega\to\R$ it holds
\begin{equation}
\label{eq:horver}
\lim_{t\downarrow0}\int \frac{f\circ\e_t-f\circ\e_0}{t}\,\d\ppi=-\int\nabla f\cdot\nabla g\,\rho\,\d\mm,
\end{equation}
where $g:=\frac{\sfd^2(\cdot,\bar x)}{2}$.
\end{lemma}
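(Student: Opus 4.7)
The proof follows the strategy announced in the text: compare the horizontal derivative of $f$ along the $W_2$-geodesic $\mu_t := (\e_t)_\sharp\ppi$ with the vertical derivative defining $\nabla f\cdot\nabla g$. My plan splits into three steps plus an obstacle discussion.

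First I would verify that $\ppi$ is a test plan on $[0, 1/2]$: the kinetic bound $\iint_0^1 |\dot\gamma_t|^2\,dt\,d\ppi = W_2^2(\mu, \delta_{\bar x}) < \infty$ is immediate from bounded support, while the density bound $(\e_t)_\sharp\ppi \leq C\mm$ for $t \in [0, 1/2]$ follows from $CD(K,N)$ applied to a sequence of measures approximating $\delta_{\bar x}$, combined with boundedness of $\rho$. Next I record, for $\ppi$-a.e.\ $\gamma$: (i) $|\dot\gamma_t|$ is constant equal to $\sfd(\gamma_0, \bar x)$; (ii) $g(\gamma_t) = (1-t)^2 g(\gamma_0)$; (iii) the crucial equality $|Dg|(\gamma_0) = \sfd(\gamma_0, \bar x) = |\dot\gamma_0|$, expressing that $\gamma$ is a steepest descent of $g$ at time $0$. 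The last identity is obtained by testing the definition of the slope along the sequence $y_n := \gamma_{1/n}$.

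For the core estimate, fix $\eps > 0$ and apply the weak upper gradient inequality (with the local Lipschitz constant, which is a weak upper gradient by Theorem \ref{thm:cheeger}) to $g + \eps f$ and to $\ppi$ reparametrized on $[0, t]$:
\begin{equation*}
\int \bigl[(g+\eps f)(\gamma_0) - (g+\eps f)(\gamma_t)\bigr]\, d\ppi \leq \iint_0^t |D(g+\eps f)|(\gamma_s)\, |\dot\gamma_s|\, ds\, d\ppi.
\end{equation*}
Using (ii) and Cauchy-Schwarz in $d\ppi \otimes ds$ on the right, this rearranges to
\begin{equation*}
\eps \int \frac{f(\gamma_0) - f(\gamma_t)}{t}\, d\ppi \leq \Big(\frac{1}{t}\int_0^t\!\int |D(g+\eps f)|^2\, d\mu_s\, ds\Big)^{1/2} W_2(\mu, \delta_{\bar x}) - (2-t)\int g\, d\mu.
\end{equation*}
Passing to the limit $t \to 0^+$, the inner average converges to $\int |D(g+\eps f)|^2\, d\mu$, and using $W_2^2(\mu, \delta_{\bar x}) = 2\int g\, d\mu = \int |Dg|^2\, d\mu$ (by (iii)) together with the elementary inequality $2ab \leq a^2 + b^2$ gives
\begin{equation*}
\limsup_{t \to 0^+} \int \frac{f(\gamma_0) - f(\gamma_t)}{t}\, d\ppi \leq \int \frac{|D(g+\eps f)|^2 - |Dg|^2}{2\eps}\, d\mu.
\end{equation*}
Letting $\eps \to 0^+$ and invoking \eqref{eq:limite} plus dominated convergence, the right-hand side tends to $\int \nabla f\cdot\nabla g\, \rho\, d\mm$. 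Applying the same argument with $f$ replaced by $-f$ and using linearity \eqref{eq:linf} gives the matching $\liminf$ bound, so the limit exists and has the claimed value.

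The main technical obstacle is the convergence $\tfrac{1}{t}\int_0^t\int |D(g+\eps f)|^2\, d\mu_s\, ds \to \int |D(g+\eps f)|^2\, d\mu$ as $t \to 0^+$: since the integrand is only $\mm$-a.e.\ defined and generally not continuous, narrow convergence $\mu_s \weakto \mu_0$ is not sufficient, and one must use $L^1$-convergence of densities $\rho_s \to \rho$ near $s=0$, which follows from the $CD(K,N)$ hypothesis combined with the boundedness of $\rho$. An alternative is to derive the inequality pointwise for $\ppi$-a.e.\ $\gamma$ -- using (iii) and the algebraic identity $a(a-b) \geq (a^2 - b^2)/2$ applied at $\gamma_0$ -- and then integrate via Fatou's lemma against the dominating $L^1(\ppi)$ function $\Lip(f)\cdot \sfd(\gamma_0, \bar x)$, avoiding the delicate continuity issue altogether.
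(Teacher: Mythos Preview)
Your main argument is essentially the paper's, with only cosmetic rearrangements: you apply Cauchy--Schwarz globally to the product $|D(g+\eps f)|\,|\dot\gamma|$ whereas the paper uses Young's inequality pointwise, and you perturb $g\mapsto g+\eps f$ rather than $-g\mapsto -g+\eps f$; both routes land on the same key inequality. The central technical obstacle you identify---convergence of $\tfrac1t\int_0^t\int|Dh|^2\,\d\mu_s\,\d s$ to $\int|Dh|^2\,\d\mu$ for a merely Borel integrand---is exactly the one the paper confronts, and the paper resolves it in detail by showing $\nu_t:=\tfrac1t\int_0^t\mu_s\,\d s$ converges to $\mu$ in duality with bounded Borel functions, using the $CD(K,N)$ assumption through the entropy convergence $\u_N(\nu_t)\to\u_N(\mu)$ and a mass-escape/contradiction argument. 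Your sketch (``$L^1$-convergence of densities from $CD(K,N)$'') points at the same mechanism.

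Your alternative at the end is a genuine and worthwhile simplification. Bounding the difference quotient pointwise via the very definition of the slope,
\[
\limsup_{t\downarrow0}\frac{(g+\eps f)(\gamma_0)-(g+\eps f)(\gamma_t)}{t}\leq |D(g+\eps f)|(\gamma_0)\,\sfd(\gamma_0,\bar x)=|D(g+\eps f)|(\gamma_0)\,|Dg|(\gamma_0),
\]
combining with the exact computation for $g$ along $\gamma$, and then integrating via reverse Fatou against the $L^1(\ppi)$ dominating function $\Lip(f)\,\sfd(\gamma_0,\bar x)$, avoids the delicate convergence of $\nu_t$ altogether. Cheeger's identity $|Dh|=\weakgrad h$ is then only needed $\mm$-a.e.\ at the initial points, where $\mu\ll\mm$ makes it applicable. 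One minor remark: your preliminary verification that $\ppi$ is a test plan is unnecessary for either approach, since for Lipschitz $h$ the local Lipschitz constant $|Dh|$ is a genuine upper gradient and the curvewise inequality holds for every absolutely continuous curve, not just $\ppi$-a.e.
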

\begin{proof}
Let $h:\Omega\to\R$ be a Lipschitz function and recall that the local Lipschitz constant $|Dh|$ is an upper gradient of $h$ so that
\[
h(\gamma_t)-h(\gamma_0)\leq\int_0^t|Dh|(\gamma_t)|\dot\gamma_t|\,\d t\leq \frac12\int_0^t|Dh|^2(\gamma_t)\,\d t+\frac12\int_0^t|\dot\gamma_t|^2\,\d t,
\]
for any absolutely continuous curve $\gamma$. Divide by $t$ and integrate this inequality w.r.t. $\ppi$ to get
\begin{equation}
\label{eq:perhorver}
\int\frac{h(\gamma_t)-h(\gamma_0)}{t}\,\d\ppi(\gamma)\leq \frac1{2t}\iint_0^t|Dh|^2(\gamma_t)\,\d t\,\d\ppi(\gamma)+\frac1{2t}\iint_0^t|\dot\gamma_t|^2\,\d t\,\d\ppi(\gamma).
\end{equation}
We claim that
\begin{equation}
\label{eq:claimbb}
\lim_{t\to 0}\frac1t\iint_0^t|Dh|^2(\gamma_t)\,\d t\,\d\ppi(\gamma)=\int|Dh|^2\rho\,\d\mm.
\end{equation}
Indeed, let $\mu_t:=(\e_t)_\sharp\ppi$ and $\nu_t:=\frac1t\int_0^t\mu_t\,\d t$ and notice that  $(\mu_t)$ weakly converges to $\mu$ as $t\downarrow 0$ in duality with $C_b(X)$. Letting $t\downarrow 0$ in \eqref{eq:cd} and using the lower semicontinuity of $\u_N$ w.r.t. weak convergence in duality with $C_b(X)$ we get $\lim_{t\downarrow0}\u_N(\mu_t)=\u_N(\mu)$. Using the convexity (w.r.t. standard affine interpolation) and the weak semicontinuity of $\u_N$ again we also obtain  $\lim_{t\downarrow0}\u_N(\nu_t)=\u_N(\mu)$. Our aim is to prove that $\lim_{t\downarrow 0}\int|Dh|^2\,\d\nu_t=\int |Dh|^2\rho\,\d\mm$. It is readily checked that $(\nu_t)$ converges to $\mu$ as $t\downarrow0$ in duality with $C_b(X)$, so that if $|Dh|^2$ is continuous there is nothing to prove. In general however, $|Dh|^2$ is just a bounded and Borel function, so that in order to get the desired convergence we need to prove that $(\nu_t)$ converges to $\mu$ in duality with Borel and bounded functions. In other words, we have to prove that there exists $c>0$ such that 
\[
\lim_{t\downarrow0}\int_{\{\rho_t>c\}}\rho_t\,\d\mm+\nu_t^s(X)=0,
\]
where $\nu_t=\rho_t\mm+\nu^s_t$, $\nu_t^s\perp\mm$. We will get this by using the fact that $\lim_{t\downarrow0}\u_N(\nu_t)=\u_N(\mu)$. Argue by contradiction and assume that there exists sequences $t_n\downarrow 0$ and $c_n\uparrow\infty$ such that 
\begin{equation}
\label{eq:lossmass}
\int_{\{\rho_{t_n}\geq c_n\}}\rho_{t_n}\,\d\mm+\nu^s_{t_n}(X)\geq C>0,\qquad\forall n\in\N.
\end{equation}
Notice that the measures $\{\nu_{t_n}\}_{n\in\N}$ are all concentrated on a single bounded set and thus, being $(X,\sfd)$ is proper, they are all concentrated on the same compact set $K$.  Define the measures $\tilde\nu_{t_n}:=\rho_{t_n}\nchi_{\{\rho_{t_n}\leq c_n\}}\mm\leq \nu_{t_n}$ and notice that $|\tilde \nu_{t_n}|(X)\leq 1-C$ and that all the $\tilde\nu_{t_n}$'s are concentrated on $K$. It easily follow that up to subsequences - not relabeled - the sequence  $(\tilde\nu_{t_n})$ weakly converges to some Borel measure $\tilde\nu$ in duality with $C_b(X)$. From
\[
\begin{split}
\big|\u_N(\nu_{t_n})-\u_N(\tilde\nu_{t_n})\big|=\int_{\{\rho_{t_n}\geq c_n\}}|u_N(\rho_{t_n})|\,\d\mm\leq \left(\int_{\{\rho_{t_n}\geq c_n\}}\rho_{t_n}\,\d\mm\right)^{\frac{N-1}{N}}\mm(\{\rho_{t_n}\geq c_n\})^{\frac1N},
\end{split}
\]
and Chebyshev's inequality $\mm(\{\rho_{t_n}\geq c_n\})\leq\frac{1}{c_n}\downarrow 0$, we get $\lim_{n\to\infty}|\u_N(\nu_{t_n})-\u_N(\tilde\nu_{t_n})|=0$ and thus
\begin{equation}
\label{eq:ass}
\u_N(\tilde\nu)\leq\limi_{n\to\infty}\u_N(\tilde\nu_{t_n})=\limi_{n\to\infty}\u_N(\nu_{t_n})=\u_N(\mu).
\end{equation}
Write $\tilde\nu=\eta\mm+\nu^s$ with $\nu^s\perp\mm$ and notice that by construction it holds $\tilde\nu\leq\mu$, so that $\nu^s=0$ and $\eta\leq\rho$ $\mm$-a.e.. The fact that  $u_N$ is strictly decreasing and \eqref{eq:ass} then forces $\eta=\rho$ $\mm$-a.e., so that  $\tilde\nu(X)=\int\eta\,\d\mm=\int\rho\,\d\mm=1$. But this is a contradiction, because from \eqref{eq:lossmass} we have $\tilde\nu_{t_n}(X)\leq 1-C$ for every $n\in\N$, so that $\tilde\nu(X)\leq1-C$ as well.

Thus \eqref{eq:claimbb} holds. Therefore passing to the limit in \eqref{eq:perhorver}, using \eqref{eq:cheeger2} and the fact that $|\dot\gamma_t|\equiv\sfd(\gamma_0,\gamma_1)=\sfd(\gamma_0,\bar x)$ for $\ppi$-a.e. $\gamma$, we get
\begin{equation}
\label{eq:perhorver2}
\lims_{t\downarrow0}\int\frac{h(\gamma_t)-h(\gamma_0)}{t}\,\d\ppi(\gamma)\leq \frac1{2}\int\weakgrad h^2\rho\,\d\mm+\frac1{2}\int\sfd^2(\cdot,\bar x)\rho\,\d\mm.
\end{equation}
Since  $\ppi$-a.e. $\gamma$ is a geodesic with $\bar x$ as endpoint, for $\ppi$-a.e. $\gamma$ it holds $\lim_{t\downarrow0}\frac{g(\gamma_t)-g(\gamma_0)}{t}=-|Dg|(\gamma_0)=-\sfd(\gamma_0,\bar x)$, therefore using  \eqref{eq:cheeger2}   again we obtain
\begin{equation}
\label{eq:perhorver3}
\lim_{t\downarrow0}\int\frac{g(\gamma_t)-g(\gamma_0)}{t}\,\d\ppi(\gamma)=-\frac1{2}\int\weakgrad{(- g)}^2\rho\,\d\mm-\frac1{2}\int\sfd^2(\cdot,\bar x)\rho\,\d\mm.
\end{equation}
Write \eqref{eq:perhorver2} for $h:=-g+\eps f$ and add  \eqref{eq:perhorver3} to get
\[
\lims_{t\downarrow0}\eps\int\frac{f(\gamma_t)-f(\gamma_0)}{t}\,\d\ppi(\gamma)\leq \frac1{2}\int\Big(\weakgrad{(-g+\eps f)}^2-\weakgrad{(-g)}^2\Big)\rho\,\d\mm.
\]
Divide by $\eps>0$ and let $\eps\downarrow0$ to get, recalling \eqref{eq:limite}, the inequality
\[
\lims_{t\downarrow0}\int\frac{f(\gamma_t)-f(\gamma_0)}{t}\,\d\ppi(\gamma)\leq -\int\nabla f\cdot\nabla g\,\rho\,\d\mm.
\]
Replace $f$ with $-f$ and use the linearity in $f$ of the right hand side to conclude.
\end{proof}

\begin{proposition}[Lower bound on the derivative of $\u_N$]\label{prop:lowerbound}
Let $(X,\sfd,\mm)$ be an infinitesimally Hilbertian $CD(K,N)$ space, $K\in \R$, $N\in(1,\infty)$, $\bar x\in X$ and $R>0$. Also, let  $\mu=\rho\mm$ be a measure concentrated on $B_R(\bar x)$ such that $\rho:B_R(\bar x)\to[0,\infty)$ is Lipschitz and bounded from below by some positive constant. 

Then for every $\ppi\in\gopt(\mu,\delta_{\bar x})$ such that \eqref{eq:cd} holds we have
\begin{equation}
\label{eq:upperbound}
\limi_{t\downarrow0}\frac{\u_N((\e_t)_\sharp\ppi)-\u_N(\mu)}t\geq-\int_{B_R(\bar x)} \nabla(\rp_N\circ \rho)\cdot \nabla g\,\d\mm,
\end{equation}
where  $g:=\frac{\sfd^2(\cdot,\bar x)}{2}$ and the pressure functional $\rp_N:\R^+\to\R$ is defined by $\rp_N(z):=z u_N'(z)-u_N(z)$.
\end{proposition}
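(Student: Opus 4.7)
Following the Riemannian template sketched before the statement, I want to lower bound the difference $\u_N(\mu_t)-\u_N(\mu)$ by an integral against $\ppi$ (using convexity of $u_N$), and then compute the limit of the latter via Lemma~\ref{le:horver}. Writing $\mu_t:=(\e_t)_\sharp\ppi=\rho_t\mm+\mu_t^s$ with $\mu_t^s\perp\mm$, the convexity of $u_N(z)=-z^{1-1/N}$ yields pointwise $u_N(\rho_t)-u_N(\rho)\geq u_N'(\rho)(\rho_t-\rho)$. Integrating against $\mm$ and using that $u_N'(\rho)\leq 0$ on $\Omega:=B_R(\bar x)$ while $\mu_t^s$ is a non-negative measure concentrated on $\Omega$, one obtains
$$\u_N(\mu_t)-\u_N(\mu)\geq\int u_N'(\rho)\,\d\mu_t-\int u_N'(\rho)\,\d\mu=\int\bigl(u_N'(\rho)\circ\e_t-u_N'(\rho)\circ\e_0\bigr)\,\d\ppi.$$

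Next I would apply Lemma~\ref{le:horver} with this $\Omega$ and $f:=u_N'\circ\rho=-\tfrac{N-1}{N}\rho^{-1/N}$. Because $\rho$ is Lipschitz and bounded below by a positive constant on $\Omega$, $f$ is Lipschitz on $\Omega$; and because $\gamma_1=\bar x$ for $\ppi$-a.e.\ $\gamma$, every intermediate point satisfies $\sfd(\gamma_t,\bar x)=(1-t)\sfd(\gamma_0,\bar x)<R$, so $(\e_t)_\sharp\ppi$ is concentrated in $\Omega$ for every $t\in[0,1]$, as required by the hypotheses of the lemma. The lemma then yields
$$\lim_{t\downarrow 0}\int\frac{f\circ\e_t-f\circ\e_0}{t}\,\d\ppi=-\int\nabla(u_N'\circ\rho)\cdot\nabla g\,\rho\,\d\mm,$$
and dividing the previous inequality by $t>0$ and taking $\liminf$ (the right-hand side being a genuine limit) gives
$$\liminf_{t\downarrow 0}\frac{\u_N(\mu_t)-\u_N(\mu)}{t}\geq-\int\rho\,\nabla(u_N'\circ\rho)\cdot\nabla g\,\d\mm.$$

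It only remains to identify this integrand with $\nabla(\rp_N\circ\rho)\cdot\nabla g$. The chain rule \eqref{eq:chainf} gives $\nabla(u_N'\circ\rho)\cdot\nabla g=u_N''(\rho)\nabla\rho\cdot\nabla g$, while differentiating the definition $\rp_N(z)=zu_N'(z)-u_N(z)$ yields the elementary identity $\rp_N'(z)=zu_N''(z)$; applying \eqref{eq:chainf} once more to $\rp_N\circ\rho$ therefore gives $\nabla(\rp_N\circ\rho)\cdot\nabla g=\rho\,u_N''(\rho)\nabla\rho\cdot\nabla g=\rho\,\nabla(u_N'\circ\rho)\cdot\nabla g$, $\mm$-a.e.\ on $\Omega$, whence \eqref{eq:upperbound}.

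The only non-routine step is the first one: converting $\u_N(\mu_t)-\u_N(\mu)$ into something to which the horizontal--vertical machinery applies. Two features make it work: the positivity assumption $\rho\geq c>0$, which makes $u_N'\circ\rho$ genuinely Lipschitz (so that Lemma~\ref{le:horver} is applicable), and the sign $u_N'\leq 0$, which allows one to safely discard the singular part $\mu_t^s$ of the interpolant without needing any direct control over it. Everything else is algebraic manipulation and an invocation of the already-established calculus for $\nabla f\cdot\nabla g$.
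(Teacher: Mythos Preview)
Your proof is correct and follows essentially the same route as the paper's: convexity of $u_N$ to linearize the entropy difference, Lemma~\ref{le:horver} applied to $f=u_N'\circ\rho$ on $\Omega=B_R(\bar x)$, and the chain rule together with $\rp_N'(z)=zu_N''(z)$ to rewrite the integrand. The only cosmetic difference is that you dispose of the singular part $\mu_t^s$ by the sign observation $u_N'(\rho)\leq 0$, whereas the paper phrases the extension to general $\nu$ concentrated on $B_R(\bar x)$ as ``a simple approximation argument based on the continuity of $\rho$''; both justifications are valid and lead to the same inequality.
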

\begin{proof}
Start noticing that since $\rho,\rho^{-1}: B_R(\bar x)\to\R$  are Lipschitz and bounded, the function $u_N'(\rho):{B_R}(\bar x)\to\R$ is Lipschitz and bounded as well.  Thus for every $\nu\in\probt{X}$ absolutely continuous w.r.t. $\mm\restr{B_R(\bar x)}$, the convexity of $u_N$ gives $\u_N(\nu)-\u_N(\mu)\geq \int_{B_R(\bar x)} u_N'(\rho)(\frac{\d\nu}{\d\mm}-\rho)\,\d\mm$. Then a simple approximation argument based on the continuity of $\rho$ gives
\[
\u_N(\nu)-\u_N(\mu)\geq \int_{B_R(\bar x)} u_N'(\rho)\,\d\nu-\int_{B_R(\bar x)} u_N'(\rho)\,\d\mu,
\]
for any $\nu\in\probt{X}$ concentrated on $B_R(\bar x)$. Plugging $\nu:=(\e_t)_\sharp\ppi$, dividing by $t$ and letting $t\downarrow0$ we get
\[
\limi_{t\downarrow 0}\frac{\u_N((\e_t)_\sharp\ppi)-\u_N((\e_0)_\sharp\ppi)}t\geq\limi_{t\downarrow0}\int\frac{u_N'(\rho)\circ\e_t-u_N'(\rho)\circ\e_0}t\,\d\ppi.
\]
Now observe that all the assumptions of Lemma \ref{le:horver} are fulfilled with $\Omega:=B_R(\bar x)$ and $f:=u_N'\circ \rho$. Hence \eqref{eq:horver} yields
\[
\limi_{t\downarrow 0}\frac{\u_N((\e_t)_\sharp\ppi)-\u_N((\e_0)_\sharp\ppi)}t\geq-\int\nabla( u_N'\circ\rho)\cdot\nabla g\,\rho\,\d\mm.
\]
To conclude, notice that $\rp_N'(z)=zu_N''(z)$ and apply twice the chain rule \eqref{eq:chainf}:
\[
 \int_{B_R(\bar x)} \nabla (u_N'\circ \rho)\cdot \nabla g\, \rho\,\d\mm=\int_{B_R(\bar x)} \rho \, u_N''\circ \rho\nabla \rho\cdot \nabla g\,\d\mm=\int_{B_R(\bar x)} \nabla (\rp_N\circ \rho)\cdot \nabla g \,\d\mm.
\]

\end{proof}

For $K\in \R$, $N\in(1,\infty)$, we introduce the functions $\tilde\tau_{K,N}:[0,\infty)\to \R$ by
\[
\tilde\tau_{K,N}(\theta):=
\left\{
\begin{array}{ll}
\dfrac 1N\left(1+\theta\sqrt{K(N-1)}\,{\rm cotan}\left(\theta\sqrt{\frac{K}{N-1}}\right) \right),&\qquad\textrm{if }K>0,\\
\\
1,&\qquad\textrm{if }K=0,\\
\\
\dfrac 1N\left(1+\theta\sqrt{-K(N-1)}\,{\rm cotanh}\left(\theta\sqrt{\frac{-K}{N-1}}\right) \right),&\qquad\textrm{if }K<0.
\end{array}
\right.
\]
Notice that $\tilde\tau_{K,N}$ satisfies
\begin{equation}
\label{eq:tautilde}
\tilde\tau_{K,N}(\theta)=\lim_{t\downarrow0}\frac{\tau^{(1)}_{K,N}(\theta)-\tau^{(1-t)}_{K,N}(\theta)}{t},
\end{equation}
(provided $K\theta^2<(N-1)\pi^2$ if $K>0$). 
\begin{proposition}[Upper bound on the derivative of $\u_N$]\label{prop:upperbound}
Let $(X,\sfd,\mm)$ be an infinitesimally Hilbertian $CD(K,N)$ space, $K\in\R$, $N\in(1,\infty)$, $\mu=\rho\mm$ a measure with bounded support and bounded density, $\bar x\in X$ and $\ppi\in\gopt(\mu,\delta_{\bar x})$ such that \eqref{eq:cd} is satisfied. 

Then 
\begin{equation}
\label{eq:upper}
\lims_{t\downarrow 0}\frac{\u_N((\e_t)_\sharp\ppi)-\u_N((\e_0)_\sharp\ppi)}{t}\leq \int_X\tilde\tau_{K,N}\big(\sfd(\cdot,\bar x)\big)\rho^{1-\frac1N}\,\d\mm.
\end{equation}
\end{proposition}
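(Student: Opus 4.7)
The natural plan is to apply the $CD(K,N)$ inequality \eqref{eq:cd} with $N'=N$ directly to the pair $(\mu,\delta_{\bar x})$ and rearrange. Since $\delta_{\bar x}$ is purely singular with respect to $\mm$, its absolutely continuous density $\eta$ vanishes $\mm$-a.e., so the $\eta^{-1/N}$-term in \eqref{eq:cd} gives no contribution (this is the standard convention for Dirac targets; alternatively one approximates $\delta_{\bar x}$ by smooth densities and uses the lower semicontinuity of $\u_N$ together with the $W_2$-stability of $\gopt$). Writing $\mu_t:=(\e_t)_\sharp\ppi$, this yields
\[
\u_N(\mu_t)\;\le\;-\int \tau^{(1-t)}_{K,N}\bigl(\sfd(\gamma_0,\bar x)\bigr)\,\rho^{-1/N}(\gamma_0)\,\d\ppi(\gamma).
\]

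Next, since $(\e_0)_\sharp\ppi=\mu=\rho\mm$ and $u_N(z)=-z^{1-1/N}$, I rewrite $\u_N(\mu_0)=-\int\rho^{1-1/N}\,\d\mm=-\int\rho^{-1/N}(\gamma_0)\,\d\ppi(\gamma)$. Subtracting and dividing by $t>0$ gives
\[
\frac{\u_N(\mu_t)-\u_N(\mu_0)}{t}\;\le\;\int \frac{1-\tau^{(1-t)}_{K,N}(\sfd(\gamma_0,\bar x))}{t}\,\rho^{-1/N}(\gamma_0)\,\d\ppi(\gamma).
\]
By the very definition \eqref{eq:tautilde} of $\tilde\tau_{K,N}$, the integrand converges $\ppi$-a.e.\ as $t\downarrow0$ to $\tilde\tau_{K,N}(\sfd(\gamma_0,\bar x))\,\rho^{-1/N}(\gamma_0)$.

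To take $\lims$ under the integral I use dominated convergence. Since $\mu$ has bounded support, there is $R>0$ with $\sfd(\gamma_0,\bar x)\le R$ for $\ppi$-a.e.\ $\gamma$; when $K>0$ the diameter bound of Proposition \ref{prop:basecd} allows us to assume $R<\pi\sqrt{(N-1)/K}$ strictly (points at which $\tilde\tau_{K,N}$ blows up force the right-hand side of \eqref{eq:upper} to be $+\infty$, making the inequality trivial). On the compact box $[0,t_0]\times[0,R]$, avoiding the pole, the map $(t,\theta)\mapsto\tau^{(1-t)}_{K,N}(\theta)$ is smooth, so $\bigl|1-\tau^{(1-t)}_{K,N}(\theta)\bigr|/t$ is bounded by a constant $C=C(K,N,R)$. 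The envelope $C\rho^{-1/N}(\gamma_0)$ is $\ppi$-integrable since
\[
\int\rho^{-1/N}(\gamma_0)\,\d\ppi(\gamma)=\int\rho^{-1/N}\,\d\mu=\int\rho^{1-1/N}\,\d\mm\le\|\rho\|_\infty^{1-1/N}\mm(\supp\mu)<\infty,
\]
where finiteness of $\mm(\supp\mu)$ follows from doubling. Dominated convergence then produces
\[
\lims_{t\downarrow0}\frac{\u_N(\mu_t)-\u_N(\mu_0)}{t}\;\le\;\int\tilde\tau_{K,N}\bigl(\sfd(\gamma_0,\bar x)\bigr)\,\rho^{-1/N}(\gamma_0)\,\d\ppi(\gamma),
\]
and translating back via $\int F(\gamma_0)\,\d\ppi=\int F\rho\,\d\mm$ gives exactly the claimed bound.

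The only genuinely delicate step is the convention for the second summand in \eqref{eq:cd} when $\nu$ is a Dirac; everything else is a controlled dominated-convergence argument. The argument is in fact the direct analogue of \eqref{eq:11} in the outline preceding Lemma \ref{le:horver}, and unlike the lower bound of Proposition \ref{prop:lowerbound} it does not require the horizontal-vertical derivatives lemma --- only the defining $CD(K,N)$ inequality plus Taylor expansion of $\tau^{(1-t)}_{K,N}$ at $t=0$.
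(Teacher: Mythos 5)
Your proof correctly handles the cases $K\le 0$ and $K>0$ with the supremum of $\sfd(\cdot,\bar x)$ over $\supp(\mu)$ strictly below $\pi\sqrt{(N-1)/K}$: the reduction from \eqref{eq:cd}, the treatment of the Dirac target (the $\eta^{-1/N'}$ term vanishes for purely singular $\nu$), and the dominated convergence argument with the bounded-density integrability check all match the paper's argument.

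The gap is in the remaining extremal case $K>0$ with $\sup_{x\in\supp(\mu)}\sfd(x,\bar x)=\pi\sqrt{(N-1)/K}$. You dispose of it by asserting that ``points at which $\tilde\tau_{K,N}$ blows up force the right-hand side of \eqref{eq:upper} to be $+\infty$, making the inequality trivial.'' This is false: the fact that $\tilde\tau_{K,N}(\sfd(\cdot,\bar x))$ is unbounded on $\supp(\mu)$ does not imply the integral $\int\tilde\tau_{K,N}(\sfd(\cdot,\bar x))\rho^{1-1/N}\,\d\mm$ diverges. In fact the paper proves exactly the opposite. In this extremal situation, applying Bishop--Gromov to the disjoint balls $B_r(\bar x)$ and $B_{D-r}(x_0)$ (where $x_0\in\supp(\mu)$ realizes the maximal distance $D$) forces the Bishop--Gromov inequalities to be equalities; this rigidity gives the explicit formula \eqref{eq:palle} for $\mm(B_r(x_0))$, hence an explicit density for $T_\sharp\mm$ with $T(x)=\sfd(x,\bar x)$, proportional to $\sin(r\pi/D)^{N-1}$. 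Since $\tilde\tau_{K,N}(r)$ blows up like $1/\sin(r\pi/D)$ near $r=D$, the product behaves like $\sin(r\pi/D)^{N-2}$, which is integrable for $N>1$. So $\tilde\tau_{K,N}(\sfd(\cdot,\bar x))\in L^1(X,\mm)$ and the right-hand side of \eqref{eq:upper} is \emph{finite}. Consequently you cannot appeal to triviality, nor can you use dominated convergence (there is no integrable dominator uniform in $t$); the paper instead uses that the incremental quotients $(1-\tau^{(1-t)}_{K,N}(\theta))/t$ increase monotonically as $t\downarrow0$, so once the limiting integrand is shown to be integrable, monotone convergence gives \eqref{eq:upper}. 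Your argument as stated leaves this sub-case unproved.
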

\begin{proof}
From $\u_N((\e_0)_\sharp\ppi)=-\int\rho^{-\frac1N}(\gamma_0)\,\d\ppi(\gamma)$, and \eqref{eq:cd}  we get
\[
\frac{\u_N((\e_t)_\sharp\ppi)-\u_N((\e_0)_\sharp\ppi)}{t}\leq\int\rho^{1-\frac1N}\bigg(\frac{1-\tau^{(1-t)}_{K,N}\big(\sfd(\cdot,\bar x)\big)}t\bigg)\,\d\mm.
\]
If $K\leq 0$ the incremental ratios in the right hand side of \eqref{eq:tautilde} are uniformly bounded, thus the dominate convergence theorem gives the conclusion. If $K>0$ and $\sup_{x\in\supp(\mu)}\sfd^2(x,\bar x)<\frac{(N-1)\pi^2}{K}$, then again  the incremental ratios in   \eqref{eq:tautilde} are uniformly bounded  we conclude as before.

Therefore, assume that $K>0$ and $\sup_{x\in\supp(\mu)}\sfd^2(x,\bar x)=\frac{(N-1)\pi^2}{K}=:\sf D^2$.  Since the convergence of the incremental ratios at the right hand side of \eqref{eq:tautilde} is monotone increasing, taking into account that $\mu\leq c\mm$ for some $c>0$, to conclude it is sufficient to show that $\tilde\tau_{K,N}(\sfd(\cdot,\bar x))\in L^1(X,\mm)$.

By compactness there exists $x_0\in\supp(\mu)$ such that $\sfd(x_0,\bar x)=\sf D $. We claim that  it holds
\begin{equation}
\label{eq:palle}
\mm(B_{r}(x_0))=\frac{\int_0^r\sin(t\pi/{\sf D})^{N-1}\,\d t}{\int_0^{\sf D}\sin(t\pi/{\sf D})^{N-1}\,\d t}\mm(X),\qquad\forall r\in[0,{\sf D}].
\end{equation}
Indeed, the two balls $B_{r}(x_0)$ and $B_{{\sf D}-r}(\bar x)$ are disjoint, so that by  the Bishop-Gromov volume comparison estimates (inequality \eqref{eq:BGv}) we have
\begin{equation}
\label{eq:sonno}
\begin{split}
\mm(X)&\geq \mm(B_{r}(\bar x))+\mm(B_{{\sf D}-r}(x_0))\\
&\geq \left( \frac{\int_0^r\sin(t\pi/{\sf D})^{N-1}\,\d t}{\int_0^{\sf D}\sin(t\pi/{\sf D})^{N-1}\,\d t}+ \frac{\int_0^{{\sf D}-r}\sin(t\pi/{\sf D})^{N-1}\,\d t}{\int_0^{\sf D}\sin(t\pi/{\sf D})^{N-1}\,\d t}\right)\!\mm(X)= \mm(X).
\end{split}
\end{equation}
Hence the inequalities for each term must be equalities and our claim is true. It follows also that 
\begin{equation}
\label{eq:diameter}
\sfd(x,\bar x)+\sfd(x,x_0)={\sf D},\qquad\forall x\in\supp(\mm),
\end{equation}
or otherwise for some $r\in[0,{\sf D}]$ the union of the balls $\mm(B_{r}(\bar x))$ and $\mm(B_{{\sf D}-r}(x_0))$ would not cover some neighborhood of an $x\in\supp(\mm)$ thus violating the fact that inequalities in \eqref{eq:sonno} are equalities.

Let $T:X\to[0,{\sf D}]$ be given by $T(x):=\sfd(x,\bar x)$. The identity \eqref{eq:palle} gives
\[
\d T_\sharp\mm(r)=\left(\frac{\sin(r\pi/{\sf D})^{N-1}}{\int_0^{\sf D}\sin(t\pi/{\sf D})^{N-1}\,\d t}\mm(X)\right)\d\mathcal L^1\restr{[0,{\sf D}]}(r).
\]
To conclude, notice that by the very definition of $\tilde\tau_{K,N}$, to prove that  $\tilde\tau_{K,N}(\sfd(\cdot,\bar x))\in L^1(X,\mm)$ it suffices to prove that 
\[
\int\limits_{B_{{\sf D}/2}(x_0)} \frac{1}{\sin\big(\sfd(x,\bar x)\sqrt{\frac K{N-1}}\big)}\,\d\mm(x)<\infty.
\]
This follows from 
\[
\begin{split}
\int\limits_{B_{{\sf D}/2}(x_0)} \frac{1}{\sin\big(\sfd(x,\bar x)\sqrt{\frac K{N-1}}\big)}\,\d\mm(x)&=\int\limits_{B_{{\sf D}/2}(x_0)} \frac{1}{\sin\big(\sfd(x,x_0)\frac\pi{\sf D}\big)}\,\d\mm(x)\\
&=\frac{\mm(X)}{\int_0^{\sf D}\sin(t\pi/{\sf D})^{N-1}\,\d t}\int_0^{{\sf D}/2}\sin\big(r\frac\pi{\sf D}\big)^{N-2}\,\d r,
\end{split}
\]
and the fact that $N>1$.
\end{proof}
\begin{theorem}[Laplacian comparison]
Let $(X,\sfd,\mm)$ be an infinitesimally Hilbertian $CD(K,N)$ space, $K\in \R$, $N\in(1,\infty)$, and $\bar x\in X$. 

Then $\frac{\sfd^2(\cdot,\bar x)}2\in D(\bd,X)$ and
\begin{equation}
\label{eq:lapsq}
\bd\frac{\sfd^2(\cdot,\bar x)}2\leq N\,\tilde\tau_{K,N}(\sfd(\cdot,\bar x))\,\mm.
\end{equation}
\end{theorem}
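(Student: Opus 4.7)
The plan is to combine Propositions \ref{prop:lowerbound} and \ref{prop:upperbound} into one integral inequality and then invoke the comparison principle of Proposition \ref{prop:compar}. Throughout, write $g:=\tfrac{1}{2}\sfd^2(\cdot,\bar x)$, which is locally Lipschitz on $X$.

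First I would fix an admissible density $\rho$ as in Proposition \ref{prop:lowerbound} (Lipschitz, bounded below by a positive constant on $B_R(\bar x)$, total mass one) and pick $\ppi\in\gopt(\rho\mm,\delta_{\bar x})$ witnessing \eqref{eq:cd}. Since $\rho\mm$ has bounded support and density, Proposition \ref{prop:upperbound} applies along the same $\ppi$, and chaining the two estimates yields
\[
-\int \nabla(\rp_N\circ\rho)\cdot\nabla g\,\d\mm \;\le\; \int \tilde\tau_{K,N}(\sfd(\cdot,\bar x))\,\rho^{1-1/N}\,\d\mm.
\]
A quick computation with $u_N(z)=-z^{1-1/N}$ gives $\rp_N(z)=\tfrac{1}{N}z^{1-1/N}$, so by linearity \eqref{eq:linf} this rearranges into
\begin{equation}\label{eq:masterlap}
-\int \nabla(\rho^{1-1/N})\cdot\nabla g\,\d\mm \;\le\; N\int \tilde\tau_{K,N}(\sfd(\cdot,\bar x))\,\rho^{1-1/N}\,\d\mm.
\end{equation}

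The key step is to exploit \eqref{eq:masterlap} with a family of densities realizing an arbitrary nonnegative test function. Given $\phi\in\test{X}$ with $\phi\ge 0$, pick $R>0$ so that $\supp(\phi)\subset B_{R/2}(\bar x)$ (and, when $K>0$, strictly below the conjugate radius $\pi\sqrt{(N-1)/K}$ whenever the space leaves room). For $\epsilon>0$ set
\[
\rho_\epsilon:=Z_\epsilon^{-1}(\phi+\epsilon)^{N/(N-1)}\text{ on }B_R(\bar x),\qquad Z_\epsilon:=\int_{B_R(\bar x)}(\phi+\epsilon)^{N/(N-1)}\,\d\mm.
\]
Properness of $(X,\sfd)$ gives $\mm(B_R(\bar x))<\infty$, and $\rho_\epsilon$ is Lipschitz (since $\phi+\epsilon$ is bounded Lipschitz and bounded away from $0$) and bounded below by $\epsilon^{N/(N-1)}/Z_\epsilon$, hence admissible in \eqref{eq:masterlap}. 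The identity $\rho_\epsilon^{1-1/N}=Z_\epsilon^{-(1-1/N)}(\phi+\epsilon)$, together with linearity of $\nabla\cdot\nabla g$ in the first entry and the vanishing of $\nabla\phi$ off $\supp(\phi)$, makes the normalization cancel on both sides of \eqref{eq:masterlap}, yielding
\[
-\int \nabla\phi\cdot\nabla g\,\d\mm \;\le\; N\int_{B_R(\bar x)} \tilde\tau_{K,N}(\sfd(\cdot,\bar x))(\phi+\epsilon)\,\d\mm.
\]

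Letting $\epsilon\downarrow 0$ by dominated convergence produces
\[
-\int \nabla\phi\cdot\nabla g\,\d\mm \;\le\; N\int \tilde\tau_{K,N}(\sfd(\cdot,\bar x))\,\phi\,\d\mm
\]
for every nonnegative $\phi\in\test{X}$, and a final application of Proposition \ref{prop:compar} with the locally finite measure $\mu:=N\tilde\tau_{K,N}(\sfd(\cdot,\bar x))\mm$ delivers both $g\in D(\bd,X)$ and \eqref{eq:lapsq}. The one delicate point is local integrability of $\tilde\tau_{K,N}(\sfd(\cdot,\bar x))$ in the $K>0$ case when $R$ must be pushed to the conjugate radius (i.e.\ when the space has diameter exactly $\pi\sqrt{(N-1)/K}$): this is precisely the rigidity scenario already treated in the proof of Proposition \ref{prop:upperbound} via the Bishop--Gromov identity \eqref{eq:palle}, which one can cite directly.
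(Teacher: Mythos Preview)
Your proof is correct and follows essentially the same route as the paper: combine Propositions \ref{prop:lowerbound} and \ref{prop:upperbound} into the single inequality \eqref{eq:masterlap}, specialize to the densities $\rho_\epsilon=c_\epsilon(\phi+\epsilon)^{N/(N-1)}$ so that $\rho_\epsilon^{1-1/N}$ is a constant multiple of $\phi+\epsilon$, let $\epsilon\downarrow 0$, and finish with Proposition \ref{prop:compar}. Your explicit remark that the local finiteness of $N\tilde\tau_{K,N}(\sfd(\cdot,\bar x))\mm$ in the borderline $K>0$ case is guaranteed by the $L^1$ estimate already obtained in the proof of Proposition \ref{prop:upperbound} is a point the paper leaves implicit.
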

\begin{proof}
Let $g:=\frac{\sfd^2(\cdot,\bar x)}{2}$. By Proposition \ref{prop:compar} it is sufficient to show that 
\begin{equation}
\label{eq:percomp}
-\int_X\nabla f\cdot\nabla g\,\d\mm\leq N\int_X f\,\tilde\tau_{K,N}(\sfd(\cdot,\bar x))\,\d\mm,\qquad\forall f\in\test X,\ f\geq 0.
\end{equation}
Thus, fix a non-negative $f\in\test X$ and let $R>0$ be such that $\supp(f)\subset B_R(\bar x)$. For $\eps>0$ define $\rho_\eps:X\to[0,\infty)$ as $\rho_\eps(x):=c_\eps(f+\eps)^{\frac N{N-1}}\nchi_{B_R(x)}$, where $c_\eps$ is such that $\int\rho_\eps\,\d\mm=1$. Let $\mu:=\rho\mm$ and $\ppi\in\gopt(\mu,\delta_{\bar x})$ be such that \eqref{eq:cd} holds. 

Propositions  \ref{prop:lowerbound} and \ref{prop:upperbound} are both applicable with $\rho_\eps$ in place of $\rho$, hence we get
\[
\begin{split}
-\int_{B_R(\bar x)} \nabla(\rp_N(\rho_\eps))\cdot \nabla g\,\d\mm&\leq \limi_{t\downarrow0}\frac{\u_N((\e_t)_\sharp\ppi)-\u_N(\mu)}t\\
&\leq\lims_{t\downarrow0}\frac{\u_N((\e_t)_\sharp\ppi)-\u_N(\mu)}t\leq\int_X\tilde\tau_{K,N}\big(\sfd(\cdot,\bar x)\big)\rho^{1-\frac1N}_\eps\,\d\mm.
\end{split}
\]
Recall that $\rp_N(z):=zu'_N(z)-u_N(z)=\frac1Nz^{1-\frac1N}$ so that by the very definition of $\rho_\eps$ we have
\[
-\frac1N\int_{B_R(\bar x)}\nabla(f+\eps)\cdot\nabla g\,\d\mm\leq \int_{B_R(\bar x)}\tilde\tau_{K,N}\big(\sfd(\cdot,\bar x)\big)(f+\eps)\,\d\mm,
\]
and letting $\eps\downarrow0 $ we get \eqref{eq:percomp} and the conclusion.
\end{proof}
In the proof of the excess estimate we will need the Laplacian comparison estimate for the distance function, rather than for the squared distance. This can be directly obtained from inequality \eqref{eq:lapsq} and the chain rule in Proposition \ref{prop:chainlap}. The corresponding comparison is then:
\begin{equation}
\label{eq:lapdis}
\sfd(\cdot,\bar x)\in D(\bd,X\setminus \{\bar x\}),\qquad\qquad\bd\sfd(\cdot,\bar x)\restr{X\setminus \{\bar x\}}\leq \frac{N\,\tilde\tau_{K,N}(\sfd(\cdot,\bar x))-1}{\sfd(\cdot,\bar x)}\mm.
\end{equation}

\subsection{The weak maximum principle}

Here we show that the notion of Laplacian we introduced allows an easy proof of the weak maximum principle, i.e. that any Lipschitz $g:\overline\Omega\to\R$ with non-negative Laplacian achieves its maximum on $\partial\Omega$. This will be sufficient for our purposes, but we remark that the strong maximum principle actually holds, i.e. that if under the same assumptions $g$ has a maximum inside $\Omega$, then it is constant on the connected component containing the maximum. This stronger result can be achieved by techniques related to non-linear potential theory (see \cite{BjornBjorn11}) and the latters are directly linked to the definition of Laplacian we gave (see \cite{GM12}). Also, we will state the result for $CD(K,N)$ spaces, but actually everything holds under just doubling\&Poincar\'e assumptions.

We start with the following statement.
\begin{proposition}[Relation with super/sub minimizers]\label{prop:relmaxmin}
Let $(X,\sfd,\mm)$ be as in \eqref{eq:mmslap},  $\Omega\subseteq X$ an open set and $g:\Omega\to\R$ a Lipschitz function. Then the following are equivalent.
\begin{itemize}
\item[i)] $g\in D(\bd,\Omega)$ and $\bd g\restr\Omega\geq 0$ (resp. $\bd g\restr\Omega\leq 0$).
\item[ii)] For any non-positive (resp. non-negative) function $f\in\test\Omega$ it holds
\begin{equation}
\label{eq:minimizers}
\int_\Omega\weakgrad g^2\,\d\mm\leq \int_\Omega\weakgrad{(g+f)}^2\,\d\mm.
\end{equation}
\end{itemize} 
\end{proposition}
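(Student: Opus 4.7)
The key observation underlying both implications is that, thanks to infinitesimal Hilbertianity, the Dirichlet energy along the one-parameter family $g+\eps f$ is a genuine quadratic polynomial in $\eps$. More precisely, since the map $\s^2(X,\sfd,\mm)\ni f\mapsto\|\weakgrad f\|_{L^2}^2$ is a quadratic form (in the sense of \eqref{eq:infhil}), there is an associated bilinear form; combining this with the definition of $\nabla f\cdot\nabla g$ and \eqref{eq:limite} identifies this bilinear form with $(f,g)\mapsto\int \nabla f\cdot\nabla g\,\d\mm$. Hence, for any $f\in\test\Omega$,
\[
h(\eps):=\int_\Omega\weakgrad{(g+\eps f)}^2\,\d\mm=\int_\Omega\weakgrad g^2\,\d\mm+2\eps\int_\Omega\nabla f\cdot\nabla g\,\d\mm+\eps^2\int_\Omega\weakgrad f^2\,\d\mm,
\]
with leading coefficient $\int_\Omega\weakgrad f^2\,\d\mm\geq 0$. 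This reduces the equivalence to a one-variable convex polynomial argument.

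For the implication (i)$\Rightarrow$(ii), assume $\bd g\restr\Omega\geq 0$ and take any non-positive $f\in\test\Omega$. By definition of distributional Laplacian,
\[
\int_\Omega\nabla f\cdot\nabla g\,\d\mm=-\int f\,\d(\bd g\restr\Omega)\geq 0,
\]
since the integrand in the right-hand side is non-positive. Plugging this into the explicit expression for $h(1)-h(0)$ gives $h(1)\geq h(0)$, which is exactly \eqref{eq:minimizers}.

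For the converse (ii)$\Rightarrow$(i), fix a non-positive $f\in\test\Omega$. For every $\eps>0$, $\eps f$ is also non-positive and compactly supported in $\Omega$, so (ii) applied with $\eps f$ in place of $f$ yields $h(\eps)\geq h(0)$. Dividing the resulting inequality $2\eps\int\nabla f\cdot\nabla g\,\d\mm+\eps^2\int\weakgrad f^2\,\d\mm\geq 0$ by $\eps$ and sending $\eps\downarrow 0$ gives $\int_\Omega\nabla f\cdot\nabla g\,\d\mm\geq 0$. By $\mm$-a.e.\ linearity of $\nabla f\cdot\nabla g$ in $f$ (formula \eqref{eq:linf}), this is equivalent to $\int_\Omega\nabla f\cdot\nabla g\,\d\mm\leq 0$ for every non-negative $f\in\test\Omega$. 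Applying Proposition \ref{prop:compar} to $-g$ with $\mu=0$, one concludes that $-g\in D(\bd,\Omega)$ with $\bd(-g)\restr\Omega\leq 0$, i.e.\ $g\in D(\bd,\Omega)$ and $\bd g\restr\Omega\geq 0$.

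The sub-minimizer case is obtained at once by replacing $g$ with $-g$ (or equivalently by swapping the sign of $f$) throughout the above argument. No step presents a serious obstacle: the only mildly delicate point is justifying that $h$ is honestly quadratic in $\eps$, which however is a direct consequence of the polarization identity for the quadratic form $\int\weakgrad\cdot^2\,\d\mm$ together with the identification \eqref{eq:limite} of $\int\nabla f\cdot\nabla g\,\d\mm$ as the associated bilinear form.
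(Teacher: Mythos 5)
Your proof is correct and follows essentially the same route as the paper: the identification of $\int_\Omega\nabla f\cdot\nabla g\,\d\mm$ with the first-order variation of $\eps\mapsto\int_\Omega\weakgrad{(g+\eps f)}^2\,\d\mm$ at $\eps=0$, combined with Proposition~\ref{prop:compar} for the converse direction. The only cosmetic difference is that you exploit the exact quadraticity of this map (via bilinearity of $\nabla\cdot\nabla$), whereas the paper only invokes its convexity together with \eqref{eq:limite}; both observations carry the same content and lead to the same computation.
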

\begin{proof}$\ $\\
\noindent{$\mathbf{(i)\Rightarrow(ii)}$} Pick $f\in\test\Omega$ and notice that the function $\eps\mapsto\int_\Omega\weakgrad{(g+\eps f)}^2\,\d\mm$ is convex, so that it holds
\[
\int_\Omega\weakgrad{(g+f)}^2\,\d\mm-\int_\Omega\weakgrad{g}^2\,\d\mm\geq \lim_{\eps\downarrow 0}\int_\Omega\frac{\weakgrad{(g+\eps f)}^2-\weakgrad{g}^2}{\eps}\,\d\mm=2\int_\Omega \nabla f\cdot\nabla g\,\d\mm,
\]
having used \eqref{eq:limite} in the last passage. The right hand side of this expression equals,  by definition,  $-2\int_\Omega f\,\d\bd g\restr\Omega$, thus from the assumption $\bd g\restr\Omega\geq 0$ we get
\begin{equation}
\label{eq:divano}
\int_\Omega\weakgrad{(g+f)}^2\,\d\mm-\weakgrad{g}^2\,\d\mm\geq0,\qquad\forall f\in\test\Omega,\ f\leq 0,
\end{equation}
as desired. The case $\bd g\restr\Omega\leq 0$ is handled analogously.

\noindent{$\mathbf{(ii)\Rightarrow(i)}$} Pick a non-positive $f\in\test\Omega$ and recall that, as before, we have
\[
\int_\Omega \nabla f\cdot\nabla g\,\d\mm=\lim_{\eps\downarrow 0}\int_\Omega\frac{\weakgrad{(g+\eps f)}^2-\weakgrad{g}^2}{2\eps}\,\d\mm.
\]
By assumption the right hand side of the above expression is $\geq 0$. Thus, the conclusion comes from Proposition \ref{prop:compar}.  The other case is analogous.
\end{proof}
The weak maximum principle for  subminimizers can be easily obtained through a truncation argument.
\begin{proposition}[Weak maximum principle]\label{thm:maxprinc}
Let $(X,\sfd,\mm)$ be a $CD(K,N)$ space,  $\Omega\subseteq X$ an open set and $g:\overline{\Omega}\to\R$ a Lipschitz function such that 
\[
\int_\Omega\weakgrad g^2\,\d\mm\leq \int_\Omega\weakgrad{(g+f)}^2\,\d\mm,\qquad\forall f\in\test\Omega,\ f\leq 0.
\]
Then 
\beq
\label{eq:maxboundary}
\max_{\overline{\Omega}} g\leq \max_{\partial\Omega} g.
\eeq
\end{proposition}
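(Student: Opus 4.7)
The plan is to argue by contradiction: I assume the interior maximum of $g$ strictly exceeds $M:=\max_{\partial\Omega}g$, build a non-positive Lipschitz function compactly supported in $\Omega$ out of the excess $g-M$, insert it into the subminimality inequality to force $\weakgrad g=0$ on the excess region, and then combine Cheeger's identification \eqref{eq:cheeger2} with the Poincar\'e inequality \eqref{eq:11poinc} to upgrade this vanishing gradient into a pointwise constancy, which contradicts the assumption.

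In detail, I would fix $\eps>0$ small enough that the set $\{g\geq M+\eps\}\cap\overline\Omega$ is nonempty; continuity of $g$ together with $g\leq M$ on $\partial\Omega$ makes this set a compact subset of $\Omega$ (here $\overline\Omega$ is assumed compact, which is automatic for bounded $\Omega$ since $(X,\sfd)$ is proper by Proposition \ref{prop:basecd}). Then $h_\eps:=(g-M-\eps)^+$ is a non-negative Lipschitz function compactly supported in $\Omega$, so $f:=-h_\eps$ is admissible in \eqref{eq:minimizers}. Since $g-h_\eps=\min\{g,M+\eps\}$ is just $g$ truncated from above by the constant $M+\eps$, the locality principle \eqref{eq:localweak} together with the chain rule \eqref{eq:chainbase} give $\weakgrad{(g-h_\eps)}=\weakgrad g\,\nchi_{\{g\leq M+\eps\}}$ $\mm$-a.e.; plugging this into the inequality forces $\int_{\{g>M+\eps\}\cap\Omega}\weakgrad g^2\,\d\mm\leq 0$, so $\weakgrad g=0$ $\mm$-a.e.\ on that set, and a further application of locality (to $h_\eps$ vs.\ the zero function) yields $\weakgrad{h_\eps}=0$ $\mm$-a.e.\ on $X$ after extending $h_\eps$ by $0$ off $\Omega$.

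For the final step I would apply Theorem \ref{thm:cheeger} to pass to $|Dh_\eps|=\weakgrad{h_\eps}=0$ $\mm$-a.e., which makes the (always admissible) local Lipschitz constant of $h_\eps$ an $\mm$-a.e.\ zero upper gradient. Feeding it into the weak local $1$-$1$ Poincar\'e inequality \eqref{eq:11poinc} forces $h_\eps$ to coincide $\mm$-a.e.\ with its mean on every ball. Since $(X,\sfd)$ is geodesic and $\mm$ is doubling with full support (Proposition \ref{prop:basecd}), any two points can be joined by a finite chain of overlapping balls, so $h_\eps$ equals a single constant $\mm$-a.e.\ on $X$; because $h_\eps$ vanishes on the nonempty open set $X\setminus\supp(h_\eps)$, which has positive $\mm$-measure since $\supp\mm=X$, that constant must be $0$, contradicting the fact that $h_\eps$ is continuous and strictly positive on $\{g>M+\eps\}$.

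The main obstacle I anticipate is the last propagation step: Poincar\'e only gives $\mm$-a.e.\ constancy ball by ball, so one has to invoke the geodesic structure to chain balls together and the full-support condition $\supp\mm=X$ to promote the $\mm$-a.e.\ conclusion to a pointwise contradiction via continuity of $h_\eps$. The rest is a routine cutoff manipulation, but this geometric-analytic combination is where the $CD(K,N)$ hypothesis pulls its full weight.
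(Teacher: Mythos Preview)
Your argument is correct and follows essentially the same route as the paper: truncate $g$ to build a non-positive test function, feed it into the subminimality inequality to kill $\weakgrad g$ on the excess set, then invoke Theorem \ref{thm:cheeger} and the Poincar\'e inequality \eqref{eq:11poinc} to promote this to constancy and reach a contradiction. The only cosmetic difference is that the paper sets $M:=\max_{\overline\Omega}g$, truncates at level $M-\eps$, and argues constancy of $g$ on a connected component of $\{g>M-\eps\}$, whereas you set $M:=\max_{\partial\Omega}g$, truncate at $M+\eps$, and argue global constancy of $h_\eps$ on $X$ via ball-chaining; both localizations work for the same reason.
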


\begin{proof}
Let $M:=\max_{\overline\Omega} g$ and suppose \eqref{eq:maxboundary} is false. Therefore, there exists $x_0\in\Omega$ and $\eps>0$ such that
\begin{equation}
\label{eq:perweak}
g(x_0)>M-\eps>\max_{\partial\Omega} g+\eps.
\end{equation}
Put $f:=-(g-M+\eps)^+\in \test\Omega$, and notice that, using  \eqref{eq:localweak}, \eqref{eq:nullset} and \eqref{eq:chainbase}, inequality \eqref{eq:minimizers} (which holds true since $f\leq 0$) can be rewritten as
\[
\begin{split}
\int_\Omega\weakgrad g^2\,\d\mm\leq \int_{\Omega}\weakgrad{(g+f)}^2\,\d\mm=\int_{\{g\leq M-\eps\}}\weakgrad g^2\,\d\mm,
\end{split}
\]
which yields $\int_{\{g> M-\eps\}}\weakgrad g^2\,\d\mm=0$ and, by Theorem \ref{thm:cheeger}, also
\begin{equation}
\label{eq:costante}
\int_{\{g> M-\eps\}}|Dg|^2\,\d\mm=0.
\end{equation}
Let $A$ be the connected component of $\{g> M-\eps\}$ containing $x_0$. By \eqref{eq:perweak} we have $\partial A\subset \Omega$ and 
\begin{equation}
\label{eq:boundary}
g\restr{\partial A}\equiv M-\eps.
\end{equation}
We claim that $g$ is constant in $A$, which clearly is in contradiction with \eqref{eq:boundary}  and $g(x_0)=M$ and thus gives the conclusion.

Given that $g$ is Lipschitz, it is immediate to see that its local Lipschitz constant is an upper gradient. Let $x\in A$ and $r>0$ such that $B_{2r}(x)\subset A$. By the Poincar\'e inequality \eqref{eq:11poinc} applied with $g$ in place of $f$, $|Dg|$ in place of $G$ and using \eqref{eq:costante} we deduce $g\equiv g(x)$ in $B_r(x)$. Hence $g$ is locally constant in $A$ and thus, since $A$ is connected, it is constant. This produces the desired contradiction and gives the proof.
\end{proof}
Combining Proposition   \ref{prop:relmaxmin} and Proposition \ref{thm:maxprinc} we immediately get the following corollary.
\begin{corollary}
\label{cor:weakmax}
Let $(X,\sfd,\mm)$ be an infinitesimally Hilbertian $CD(K,N)$ space, $K\in \R$, $N\in(1,\infty)$. Let $\Omega\subseteq X$ be open and $g:\overline{\Omega}\to\R$ a Lipschitz function in $D(\bd,\Omega)$. 

Assume that  $\bd g\restr\Omega\geq 0$. Then $\max_{\overline{\Omega}} g\leq \max_{\partial\Omega} g$. 
\end{corollary}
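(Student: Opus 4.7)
The strategy is transparent once the two preceding propositions are in place: the corollary is essentially a tautology combining Proposition \ref{prop:relmaxmin} with Proposition \ref{thm:maxprinc}. So my plan is simply to translate the Laplacian hypothesis into an energy-minimality statement and then quote the weak maximum principle.

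First, I would invoke the equivalence $(i)\Leftrightarrow(ii)$ of Proposition \ref{prop:relmaxmin}: since $g\in D(\bd,\Omega)$ with $\bd g\restr\Omega\geq 0$, it follows that
\[
\int_\Omega\weakgrad g^2\,\d\mm\leq \int_\Omega\weakgrad{(g+f)}^2\,\d\mm,\qquad\forall f\in\test\Omega,\ f\leq 0.
\]
This is exactly the hypothesis under which Proposition \ref{thm:maxprinc} is formulated. (Note that Proposition \ref{prop:relmaxmin} is stated on $\Omega$ without any regularity of $g$ up to the boundary, while here we assume $g$ is Lipschitz on $\overline\Omega$; the restriction $g\restr\Omega$ still lies in $D(\bd,\Omega)$ with the same distributional Laplacian, so the equivalence applies without modification.)

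Second, I would apply Proposition \ref{thm:maxprinc} directly to conclude
\[
\max_{\overline{\Omega}} g\leq \max_{\partial\Omega} g,
\]
which is the thesis. There is no genuine obstacle: the only point requiring a line of justification is the compatibility between the two propositions' hypotheses — namely that the ambient infinitesimally Hilbertian $CD(K,N)$ assumption suffices to invoke Proposition \ref{thm:maxprinc} (which is stated for $CD(K,N)$ spaces) and that the Lipschitz regularity on $\overline\Omega$ is stronger than the Lipschitzianity on $\Omega$ needed by Proposition \ref{prop:relmaxmin}. Both are immediate.
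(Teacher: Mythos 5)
Your proof is correct and follows exactly the route the paper intends: the paper itself only says ``Combining Proposition \ref{prop:relmaxmin} and Proposition \ref{thm:maxprinc} we immediately get the following corollary,'' which is precisely your two-step translation from $\bd g\restr\Omega\geq 0$ to local sub-minimality via Proposition \ref{prop:relmaxmin}(i)$\Rightarrow$(ii), followed by Proposition \ref{thm:maxprinc}. Your remark on hypothesis compatibility (properness of $CD(K,N)$ spaces from Proposition \ref{prop:basecd} ensuring \eqref{eq:mmslap} holds, and Lipschitz on $\overline\Omega$ restricting to Lipschitz on $\Omega$) is the right thing to check and is indeed immediate.
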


\section{Excess estimates}\label{se:exest}
In this section we prove the main result of this paper, namely the Abresch-Gromoll inequality. We will give two versions of it: the first one (Theorem \ref{thm:AG1}) corresponds to the inequality originally proved in \cite{AbreschGromoll90} and the second one (Theorem \ref{thm:AG2}) to the version given by Cheeger-Colding in \cite{Cheeger-Colding96}.

Let 
\[
s_{K,N}(\theta)=
\begin{cases}
\sqrt{\frac{N-1}{K}}\sin\left(\theta\sqrt{\frac{K}{N-1}}\right)&\text{ if $K>0$},\\
\theta&\text{ if $K=0$},\\
\sqrt{\frac{N-1}{-K}}\sinh\left(\theta\sqrt{\frac{-K}{N-1}}\right)&\text{ if $K<0$},
\end{cases}
\]
and notice that the following scaling property holds
\beq
\label{eq:scalings}
s_{K, N}(\lambda\theta)=\lambda s_{\lambda^{2}K, N}(\theta),\qquad \forall \lambda>0.
\eeq
Moreover for any $0<r\leq R$ we define $\phi_{K,N}(r,R)$ as
\[
\phi_{K,N}(r,R)=\int_{r\leq \xi\leq \eta\leq R}\left(\frac{s_{K, N}(\eta)}{s_{K, N}(\xi)}\right)^{N-1}\,\d\eta\,\d\xi.
\]
Notice that $\phi_{K,N}(\cdot,\cdot)$  is smooth, positive and well defined as long as $KR^{2}< (N-1)\pi^{2}$. A straightforward computation gives
\beq
\label{eq:phi'}
\frac{\d}{\d r}\phi_{K,N}(r,R)=-\int_{r}^{R}\left(\frac{s_{K, N}(\eta)}{s_{K, N}(r)}\right)^{N-1}\,\d\eta\leq 0,
\eeq
\beq
\label{eq:phi''}
\frac{\d^{2}}{\d r^{2}}\phi_{K,N}(r,R)=1+(N-1)\frac{s_{K, N}'(r)}{s_{K, N}(r)}\int_{r}^{R}\left(\frac{s_{K, N}(\eta)}{s_{K, N}(r)}\right)^{N-1}\,\d\eta.
\eeq

\begin{lemma}
Let $(X,\sfd,\mm)$ be an infinitesimally Hilbertian $CD(K,N)$ space, $K\in\R$, $N\in (1,+\infty)$, $\bar x\in X$ and $R>0$ such that $KR^{2}< (N-1)\pi^{2}$. 

Then the function   $\phi_{K, N}(\sfd (\bar x, \cdot), R)$ belongs to $D(\bd, B_R(\bar x)\setminus \{ \bar x\})$ and it holds 
\begin{align}
\label{eq:phigeqm}
\bd \phi_{K, N}(\sfd (\bar x, \cdot), R)\restr{B_R(\bar x)\setminus\{\bar x\}}&\geq \mm\restr{B_R(\bar x)\setminus\{\bar x\}},\\
\nonumber
\phi_{K, N}(\sfd (\bar x, \cdot), R)\restr{S_R(\bar x)}&=0.
\end{align}
\end{lemma}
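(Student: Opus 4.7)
The strategy is to apply the chain rule for the Laplacian (Proposition \ref{prop:chainlap}) to the composition of $\varphi(t):=\phi_{K,N}(t,R)$ with the distance function $r(\cdot):=\sfd(\cdot,\bar x)$, then use the Laplacian comparison \eqref{eq:lapdis} together with the identity $\weakgrad r = 1$ $\mm$-a.e.\ on $X\setminus\{\bar x\}$. As a preliminary step, I would rewrite the right-hand side of \eqref{eq:lapdis} in the form
\[
\bd r\restr{X\setminus\{\bar x\}}\le (N-1)\frac{s'_{K,N}(r)}{s_{K,N}(r)}\,\mm,
\]
which follows from a short case-by-case trigonometric/hyperbolic calculation matching the explicit definitions of $\tilde\tau_{K,N}$ and $s_{K,N}$. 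Note that $\varphi$ is smooth on $(0,R]$ thanks to the assumption $KR^2<(N-1)\pi^2$ (so that $s_{K,N}$ stays positive), and $r$ maps $B_R(\bar x)\setminus\{\bar x\}$ into $(0,R)$, so the chain rule applies on this open set and gives
\[
\bd(\varphi\circ r)\restr{B_R(\bar x)\setminus\{\bar x\}}=\varphi'(r)\,\bd r+\varphi''(r)\,\weakgrad r^2\,\mm.
\]

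Since $\varphi'(r)\le 0$ by \eqref{eq:phi'}, multiplying the preliminary comparison by $\varphi'(r)$ reverses its sign and yields
\[
\bd(\varphi\circ r)\ge\Bigl[\varphi'(r)(N-1)\tfrac{s'_{K,N}(r)}{s_{K,N}(r)}+\varphi''(r)\,\weakgrad r^2\Bigr]\mm.
\]
The crucial observation is now that $\weakgrad r=1$ $\mm$-a.e.\ on $X\setminus\{\bar x\}$. Indeed, $r$ is $1$-Lipschitz so $|D r|\le 1$; conversely, for any $x\neq\bar x$, picking a geodesic $\gamma$ from $\bar x$ to $x$ (whose existence is granted by Proposition \ref{prop:basecd}) and taking $y_n:=\gamma_{1-1/n}$ gives $|r(y_n)-r(x)|/\sfd(y_n,x)=1$, hence $|D r|(x)\ge 1$. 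Combining with Theorem \ref{thm:cheeger} we get $\weakgrad r=|D r|=1$ $\mm$-a.e.\ on $X\setminus\{\bar x\}$.

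Substituting this together with the identity $\varphi''(r)=1-\varphi'(r)(N-1)\frac{s'_{K,N}(r)}{s_{K,N}(r)}$ coming from \eqref{eq:phi''}, the bracketed expression above collapses identically to $1$, proving \eqref{eq:phigeqm}. The boundary statement is immediate: at points $x\in S_R(\bar x)$ one has $r(x)=R$, and $\phi_{K,N}(R,R)=0$ directly from the definition, since the integration domain is a set of Lebesgue measure zero.

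The only subtle point is how the inequality $\bd r\le\mu$ interacts with the chain rule, which is an equality for the signed measure $\bd r$: what makes it work is the sign $\varphi'(r)\le 0$, which allows one to multiply the inequality by $\varphi'(r)$ consistently, and the perfect cancellation of the $\varphi'(r)(N-1)s'/s$ term with the corresponding piece of $\varphi''(r)$ once one knows $\weakgrad r^2=1$. In this sense, the identity $\weakgrad r=1$ is what encodes the `eikonal' nature of the distance function and is the single geometric input, beyond the $CD(K,N)$ condition, driving the lemma.
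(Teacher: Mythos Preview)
Your proof is correct and follows essentially the same approach as the paper: apply the chain rule (Proposition~\ref{prop:chainlap}) to $\varphi=\phi_{K,N}(\cdot,R)$ composed with $r=\sfd(\cdot,\bar x)$, rewrite the comparison \eqref{eq:lapdis} as $\bd r\le (N-1)\tfrac{s'_{K,N}(r)}{s_{K,N}(r)}\mm$, use $\weakgrad r=1$ $\mm$-a.e.\ via Theorem~\ref{thm:cheeger}, and combine with \eqref{eq:phi'}--\eqref{eq:phi''} so that the terms collapse to $1$. Your write-up is in fact slightly more explicit than the paper's (you justify $|Dr|=1$ and spell out why the sign $\varphi'\le 0$ is what allows the inequality to pass through the chain rule), but the argument is the same.
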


\begin{proof}
By \eqref{eq:lapdis} we know that $\sfd (\bar x, \cdot)\in D(\bd, B_R(\bar x)\setminus \{\bar x\})$.

From $KR^{2}<(N-1)\pi^{2}$ we have $\phi_{K, N}(\cdot, R)\in C^{\infty}((0, R))$ and proposition \ref{prop:chainlap} gives $\phi_{K, N}(\sfd (\bar x, \cdot), R)\in D(\bd, B_R(\bar x)\setminus \{\bar x\})$. Now notice that 
\[
\frac{N\,\tilde\tau_{K,N}(r)-1}{r}=(N-1)\frac{s_{K, N}'(r)}{s_{K, N}(r)},
\]
and that \eqref{eq:cheeger2} implies $\weakgrad \sfd (\bar x, \cdot)=1$ $\mm$-a.e. Using \eqref{eq:phi'}, \eqref{eq:phi''} into \eqref{eq:chainlap}, together with Laplacian comparison \eqref{eq:lapdis} and the previous formula now gives \eqref{eq:phigeqm}. 

The  second property follows from the definition.
\end{proof}

\begin{proposition}
\label{prop:preAB}
Let $(X,\sfd,\mm)$ be an infinitesimally Hilbertian $CD(K,N)$ space, $K\in\R$, $N\in (1,+\infty)$, $\bar x\in X$ and $R>0$ such that $KR^{2}< (N-1)\pi^{2}$. Let also  $g:B_R(\bar x)\to \R$ be a Lipschitz function such that
\begin{itemize}
\item[i)]
$g\geq 0$,
\item[ii)] $g\in D(\bd,B_R(\bar x)\setminus\{\bar x\})$ and 
$\bd g\restr{B_R(\bar x)\setminus\{\bar x\}}\leq a\mm$ for some $a\geq 0$,
\item[iii)]
$g(p)< a\phi_{K, N}(\sfd(\bar x, p), R)$ for some $p$ with $\sfd(\bar x, p)=h<R$.
\end{itemize}
Then
\beq
\label{eq:bound}
g(\bar x)\leq \inf_{0<\theta< h}\Big(\Lip(g)\theta+a\phi_{K, N}(\theta, R)\Big).
\eeq
\end{proposition}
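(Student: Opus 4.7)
The plan is to use $\phi_{K,N}(\sfd(\bar x,\cdot),R)$ as an explicit barrier and to apply the weak maximum principle (Corollary \ref{cor:weakmax}) on an annular region around $\bar x$. The key observation is that, by hypothesis (ii) and the preceding lemma, the function
\[
u:=a\,\phi_{K,N}(\sfd(\bar x,\cdot),R)-g
\]
satisfies $\bd u\geq 0$ on $B_R(\bar x)\setminus\{\bar x\}$ (this uses $a\geq 0$ and the linearity of $\bd$), so it is a subsolution to which the weak maximum principle will apply once the singular point $\bar x$ is excised.

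Concretely, I fix an arbitrary $\theta\in(0,h)$ and work on the annulus $A_\theta:=B_R(\bar x)\setminus\overline B_\theta(\bar x)$. Since $(X,\sfd)$ is proper (Proposition \ref{prop:basecd}) and $\phi_{K,N}(\cdot,R)$ is smooth on $[\theta,R]$, the function $u$ extends to a Lipschitz function on the compact set $\overline{A}_\theta$, and $\bd u\restr{A_\theta}\geq 0$. Corollary \ref{cor:weakmax} then yields $\max_{\overline{A}_\theta}u\leq \max_{\partial A_\theta}u$, with $\partial A_\theta\subseteq S_\theta(\bar x)\cup S_R(\bar x)$. On $S_R(\bar x)$ the identity $\phi_{K,N}(R,R)=0$ together with $g\geq 0$ forces $u\leq 0$, while on $S_\theta(\bar x)$ the Lipschitz estimate $g(y)\geq g(\bar x)-\Lip(g)\,\theta$ gives
\[
u(y)\leq a\,\phi_{K,N}(\theta,R)-g(\bar x)+\Lip(g)\,\theta.
\]

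The final ingredient is hypothesis (iii): the point $p$ satisfies $\sfd(\bar x,p)=h>\theta$, so $p\in\overline{A}_\theta$, and $g(p)<a\,\phi_{K,N}(h,R)$ means $u(p)>0$. Hence $\max_{\overline{A}_\theta}u>0$, and since $u\leq 0$ on $S_R(\bar x)$, the boundary maximum is realized (up to sign) on $S_\theta(\bar x)$; combined with the Lipschitz bound above this yields
\[
0<a\,\phi_{K,N}(\theta,R)-g(\bar x)+\Lip(g)\,\theta,
\]
i.e.\ $g(\bar x)<\Lip(g)\,\theta+a\,\phi_{K,N}(\theta,R)$. Taking the infimum over $\theta\in(0,h)$ gives \eqref{eq:bound}.

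The step I expect to need the most care is the setup itself, namely verifying that Corollary \ref{cor:weakmax} genuinely applies to $u$ on the punctured domain $A_\theta$: this requires checking that $u$ is Lipschitz up to $\overline{A}_\theta$ (which is why the choice $\theta>0$ is essential, $\phi_{K,N}(\cdot,R)$ being singular only at $r=0$), identifying $\partial A_\theta$ explicitly as a subset of the two spheres, and confirming $\bd u\geq 0$ on $A_\theta$ itself via the locality of the measure-valued Laplacian. The rest of the argument is a direct sign-chasing on the two boundary components, so no additional curvature or optimal-transport input is needed beyond what is already encoded in the barrier lemma.
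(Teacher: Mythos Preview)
Your proof is correct and follows essentially the same approach as the paper's: both apply the weak maximum principle to $a\phi_{K,N}(\sfd(\bar x,\cdot),R)-g$ on the annulus $B_R(\bar x)\setminus\overline B_\theta(\bar x)$, using the barrier property \eqref{eq:phigeqm} together with hypotheses (i)--(iii). The only cosmetic difference is that the paper argues by contradiction (assuming the bound fails for some $\theta$ and deducing $u\leq 0$ on the whole annulus, contradicting $u(p)>0$), whereas you argue directly (using $u(p)>0$ to force the boundary maximum onto $S_\theta(\bar x)$ and reading off the strict inequality for every $\theta$); these are logically equivalent presentations of the same idea.
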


\begin{proof}
To shorten the notation, we set $\phi(x):=\phi_{K, N}(\sfd(\bar x, x), R)$. Suppose \eqref{eq:bound} is false: then for some $\theta\in (0, h)$ it holds
\beq
\label{eq:pl}
g(\bar x)-\Lip(g)\theta>a\phi(\theta, R).
\eeq
Then the trivial inequality $g\restr{S_\theta(\bar x)}\geq g(\bar x)-\Lip(g)\theta$ yields $a\phi-g\leq 0$ on $S_\theta(\bar x)$. Moreover, since by (i) we have $g\geq 0$ and by construction $\phi\restr{S_R(\bar x)}=0$, we also get $a\phi-g\leq 0$ on $S_R(\bar x)$. Thus from $\partial \big(B_R(\bar x)\setminus \overline B_\theta(\bar x)\big)\subseteq S_R(\bar x)\cup S_\theta(\bar x)$ we deduce
\[
(a\phi-g)\restr{\partial (B_R(\bar x)\setminus \overline B_\theta(\bar x))}\leq 0.
\]
Moreover the linearity of the Laplacian, inequality \eqref{eq:phigeqm} and assumption (ii) ensure
\beq
\label{eq:laplpos}
\bd (a\phi-g)\restr{B_R(\bar x)\setminus \overline B_\theta(\bar x)}\geq 0.
\eeq
Hence the  weak maximum principle in Corollary \ref{cor:weakmax} ensures $a\phi-g\leq 0$ on $B_R(\bar x)\setminus \overline B_\theta(\bar x)$. This contradicts assumption (iii), thus the proof is completed.
\end{proof}
We apply this proposition to prove two Abresch--Gromoll type estimates on the excess function in infinitesimally Hilbertian $CD(K, N)$ spaces with $K\leq 0$. Notice that for non-positive $K$  the function $t\mapsto t^{-1}s_{K, N}(t)$ is nondecreasing on $\R^{+}$ and bounded from below by $1$. Therefore it holds
\beq
\label{eq:monotonics}
\frac{\eta}{\xi}\leq \frac{s_{K, N}(\eta)}{s_{K, N}(\xi)}\leq \frac{s_{K, N}(R)}{R}\frac{\eta}{\xi},\qquad \text{for $0<\xi\leq \eta\leq R$},
\eeq
which provides the comparison estimate
\beq
\label{eq:compphi}
\phi_{0, N}(r, R)\leq \phi_{K, N}(r, R)\leq \left(\frac{s_{K, N}(R)}{R}\right)^{N-1}\phi_{0, N}(r, R).
\eeq
Also, the following scaling property is easily verified: 
\beq
\label{eq:scalingphi}
\phi_{K, N}(\lambda r, \lambda R)=\lambda^{2}\phi_{\lambda^{2}K, N}(r, R),\qquad\forall\lambda > 0
\eeq

We now fix some notation. Let $x_0,x_1\in \supp(\mm)$ be two points and $[0,1]\ni t\mapsto \gamma_t\in X$ a  minimizing geodesic joining them. In the following, we will omit the explicit dependence on the points $x_{0}$ and $x_{1}$ with respect to which we compute various quantities. We set 
\[
h(x)=\min_{t\in[0,1]}\sfd (x, \gamma(t)), \qquad l(x)=\min\{\sfd(x_{0},x),\sfd(x_{1},x)\},
\]
and define the excess function as
\[
E(x)=\sfd(x, x_{0})+\sfd(x, x_{1})-\sfd (x_{0}, x_{1}).
\]
By the triangle inequality and basic properties of the distance function it holds 
\beq
\label{eq:basicE}
0\leq E(x)\leq 2h(x), \qquad E\circ\gamma=0, \qquad \Lip(E)\leq 2.
\eeq
We furthermore define
\beq
\label{eq:defck}
c_{K, N}(\theta)=
\begin{cases}
\dfrac{N-1}{\theta}, &\text{if $K=0$},\\
&\\
\sqrt{-K(N-1)}{\rm cotanh}\left(\theta\sqrt{\frac{-K}{N-1}}\right), &\text{if $K<0$},
\end{cases}
\eeq
so that the comparison estimate \eqref{eq:lapdis} can be more compactly written as
\begin{equation}
\label{eq:compdis2}
\sfd(\cdot,\bar x)\in D(\bd,X\setminus \{\bar x\}),\qquad\qquad\bd\sfd(\cdot,\bar x)\restr{X\setminus \{\bar x\}}\leq c_{K,N}\big(\sfd(\cdot,\bar x)\big)\mm.
\end{equation}

\begin{theorem}\label{thm:AG1}
Let $(X,\sfd,\mm)$ be an infinitesimally Hilbertian $CD(K,N)$ space, $K\leq 0$, $N\in (1,+\infty)$. With the same notation as above, let $\bar x\in \supp(\mm)$ be such that $l(\bar x)>h(\bar x)$. Then
\beq
\label{eq:AB}
E(\bar x)\leq 
\begin{cases}
2\frac{N-1}{N-2}\left(D_{K, N}(\bar x)h^{N}(\bar x)\right)^{\frac{1}{N-1}}&\text{if $N>2$},\\
\frac{N}{2-N}D_{K, N}(\bar x)h^{2}(\bar x)&\text{if $1<N<2$},\\
D_{K, N}(\bar x)h^{2}(\bar x)\left(\frac{1}{1+\sqrt{1+D^{2}(\bar x)h^{2}(\bar x)}}+\log\frac{1+\sqrt{1+D^{2}(\bar x)h^{2}(\bar x)}}{D_{K, N}(\bar x) h(\bar x)}\right)&\text{if $N=2$},
\end{cases}
\eeq
where 
\[
D_{K, N}(\bar x)=\left(\frac{s_{K, N}(h(\bar x))}{h(\bar x)}\right)^{N-1}\frac{c_{K, N}(l(\bar x)-h(\bar x))}{N}.
\]
\end{theorem}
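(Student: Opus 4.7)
The strategy is to feed the excess function $g := E$ directly into Proposition~\ref{prop:preAB}. Fix any $R \in (h(\bar x), l(\bar x))$, which is possible by the assumption $l(\bar x) > h(\bar x)$. The ball $B_R(\bar x)$ is then disjoint from $\{x_0, x_1\}$, so the distance functions $\sfd(\cdot, x_i)$ are both in $D(\bd, B_R(\bar x))$. The triangle inequality gives $E \geq 0$ (hypothesis (i)), and linearity of $\bd$ combined with the comparison \eqref{eq:compdis2} yields
\[
\bd E \restr{B_R(\bar x)} \leq \bigl(c_{K,N}(\sfd(\cdot, x_0)) + c_{K,N}(\sfd(\cdot, x_1))\bigr)\mm \leq 2 c_{K,N}(l(\bar x) - R)\mm,
\]
where I use that $c_{K,N}$ is non-increasing on $(0,\infty)$ for $K \leq 0$ and that $\sfd(\cdot, x_i) \geq l(\bar x) - R$ throughout $B_R(\bar x)$; this establishes hypothesis (ii) with $a_R := 2 c_{K,N}(l(\bar x) - R)$. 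For hypothesis (iii), choose $p := \gamma_{t_*}$ where $t_*$ realizes the minimum defining $h(\bar x)$: then $\sfd(\bar x, p) = h(\bar x) < R$ and $E(p) = 0 < a_R \phi_{K,N}(h(\bar x), R)$.

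Proposition~\ref{prop:preAB}, together with $\Lip(E) \leq 2$ from \eqref{eq:basicE}, then gives
\[
E(\bar x) \leq \inf_{0 < \theta < h(\bar x)}\bigl(2\theta + a_R\, \phi_{K,N}(\theta, R)\bigr).
\]
Sending $R \downarrow h(\bar x)$, which is legitimate by continuity of $c_{K,N}$ and of $R \mapsto \phi_{K,N}(\theta, R)$ for $R > \theta$, and then applying \eqref{eq:compphi} to replace $\phi_{K,N}(\theta, h(\bar x))$ by $(s_{K,N}(h(\bar x))/h(\bar x))^{N-1}\phi_{0,N}(\theta, h(\bar x))$, I arrive at
\[
E(\bar x) \leq 2\theta + 2N\, D_{K,N}(\bar x)\, \phi_{0,N}(\theta, h(\bar x)), \qquad \forall\, \theta \in (0, h(\bar x)),
\]
where $D_{K,N}(\bar x)$ is exactly the quantity appearing in the statement.

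It remains to compute $\phi_{0,N}$ in closed form and optimize in $\theta$. A direct integration yields, for $N \neq 2$,
\[
\phi_{0,N}(\theta, h) = \frac{1}{N-2}\Bigl[\theta^{2-N}\frac{h^N - \theta^N}{N} - \frac{h^2 - \theta^2}{2}\Bigr],
\]
and $\phi_{0,2}(\theta, h) = \tfrac{h^2}{2}\log(h/\theta) - \tfrac{h^2 - \theta^2}{4}$ when $N = 2$. For $N > 2$ the function $\theta \mapsto 2\theta + 2ND\phi_{0,N}(\theta, h)$ blows up as $\theta \downarrow 0$ and its stationary point $\theta_*$, satisfying $\theta_*^{N-1} = D(h^N - \theta_*^N)$, gives on substitution the bound $2\tfrac{N-1}{N-2}(Dh^N)^{1/(N-1)}$. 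For $1 < N < 2$ the term $\theta^{2-N}(h^N - \theta^N)$ vanishes as $\theta \downarrow 0$, so the infimum is attained in that limit and equals $NDh^2/(2-N)$. For $N = 2$ the first-order condition is the quadratic $D\theta^2 + \theta - Dh^2 = 0$, and substituting its positive root back into the expression produces, after elementary manipulation, the logarithmic formula in the statement.

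The main obstacle is not conceptual: the Laplacian comparison, linearity and the sub-solution version of the maximum principle are all packaged in Proposition~\ref{prop:preAB}, and the reduction to $\phi_{0,N}$ through \eqref{eq:compphi} is clean. The real work is the bookkeeping in the final optimization step --- carrying out the three separate minimizations and matching them with the exact constants in the theorem --- and ensuring the limit $R \downarrow h(\bar x)$ delivers the sharp coefficient $c_{K,N}(l(\bar x) - h(\bar x))$ rather than something weaker.
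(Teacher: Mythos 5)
Your proof follows the paper's argument step by step: feed $g=E$ into Proposition~\ref{prop:preAB} on $B_R(\bar x)$ for $R\in(h(\bar x),l(\bar x))$, use monotonicity of $c_{K,N}$ and linearity of $\bd$ for the bound $\bd E\leq 2c_{K,N}(l(\bar x)-R)\mm$, choose the nearest point $p=\gamma_{t_*}$ on the geodesic, send $R\downarrow h(\bar x)$, reduce to $\phi_{0,N}$ via \eqref{eq:compphi}, and optimize in $\theta$; the closed form of $\phi_{0,N}$, the stationarity equation $\theta_0^{N-1}=D(h^N-\theta_0^N)$, and the resulting bounds all match the paper's. One small imprecision: for $1<N<2$ the infimum of $\Phi$ is \emph{not} attained in the limit $\theta\downarrow 0$ (indeed $\Phi'(\theta)\to-\infty$ there, and the minimizer is the interior point $\theta_0$), but since $\inf_{(0,h)}\Phi\leq\lim_{\theta\downarrow 0}\Phi(\theta)=\frac{N}{2-N}Dh^2$ your claimed bound is still a correct, if not tight, upper bound and matches what the paper obtains by evaluating at $\theta_0$ and discarding the negative term.
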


\begin{proof}
Let $h(\bar x)<r<l(\bar x)$ and notice that by the monotonicity of $c_{K, N}$ we get
\[
c_{K, N}(\sfd (x_{i}, x))\leq c_{K, N}(l(\bar x)-r),\qquad \forall x\in B_r(\bar x), \ \ i=1,2.
\]
Estimate \eqref{eq:compdis2} and the linearity of the Laplacian thus ensure
\[
\bd E\restr{B_r(\bar x) }\leq 2c_{K, N}(l(\bar x)-r)\mm.
\]
Now let $p$ be a point on $\gamma$ such that $\sfd(\bar x, p)=h(\bar x)$.  By the positivity of $\phi_{K, N}$ we have
\[
0=E(p)< 2c_{K, N}(l(\bar x)-r)\phi_{K, N}(\sfd(\bar x, p), R),\qquad \forall R>h.
\]
Hence we  can apply Proposition \ref{prop:preAB}, and taking $R\downarrow h(\bar x)$ in \eqref{eq:bound} we obtain 
\[
E(\bar x)\leq \inf_{0<\theta<h}\Big(2\theta+2c_{K, N}(l(\bar x)-h(\bar x))\phi_{K, N}(\theta, h(\bar x))\Big).
\]

To estimate such a minimum we make use of \eqref{eq:compphi} and the explicit form of $\phi_{0, N}$ to get, for $N\neq 2$, $N>1$
\[
2\theta+2c_{K, N}(l(\bar x)-h(\bar x))\phi_{K, N}(\theta, h(\bar x))\leq 2\theta+
D_{K, N}(\bar x)\big(\theta^{2}-\frac{N}{N-2}h^{2}(\bar x)+\frac{2}{N-2}h^{N}(\bar x)\theta^{2-N}\big).
\]
The right hand side is a convex function $\Phi(\theta)$ such that $\Phi'\to-\infty$ for $\theta\downarrow 0$ and $\Phi'(h(\bar x))=2>0$, therefore its minimum is achieved at the unique point $\theta_{0}\in (0, h(\bar x))$ satisfying $\Phi'(\theta_{0})=0$. It is easily seen that $\theta_{0}$ satisfies
\beq
\label{eq:thetazero}
\theta_{0}^{N-1}=D_{K, N}(\bar x)(h^{N}(\bar x)-\theta_{0}^{N})\leq D_{K, N}(\bar x)h^{N}(\bar x),
\eeq
and some algebra gives
\beq
\label{eq:eestim}
E(\bar x)\leq 2\frac{N-1}{N-2}\theta_{0}+\frac{D_{K, N}(\bar x)}{N-2}(\theta^{2}_{0}-h^{2}(\bar x)).
\eeq
Now if $N>2$ the second term is negative, and using \eqref{eq:thetazero} we obtain the first inequality in \eqref{eq:AB}. 
Similarly, if $1<N<2$, the first term in \eqref{eq:eestim} is negative and we obtain the second inequality. For $N=2$ the integral gives rise to a logarithmic factor, and an explicit calculation along the same lines as above gives the last estimate in \eqref{eq:AB}.
\end{proof}

Observe that, by the definition \eqref{eq:defck} of $c_{K, N}$, it holds
\[
\lim_{K, l^{-1}\to 0} c_{K, N}(l-h(\bar x))=0.
\]
Therefore, for bounded $h(\bar x)$, $D_{K, N}(\bar x)\to 0$ when $l(\bar x)\uparrow+\infty$ and $K\uparrow 0$. Let then $p\neq x_0, x_1$ a point on the minimizing geodetic $\gamma$ and $R>0$ such that $2R<l(p)$. By the triangle inequality, for any $\bar x\in B_R(p)$ it holds $h(\bar x)\leq R$, $l(p)-R< l(\bar x)$ and \eqref{eq:AB} implies that for suitable $\alpha_{N}>0$  it holds
\[
\sup_{B_R(p)}E\leq C(N, R) \left(c_{K, N}(l(p)-2R)\right)^{\alpha_{N}}\to 0\qquad \text{for $l(p)\uparrow +\infty$ and $K\uparrow 0$.}
\]
This asympthotic behaviour relies on the fact that $E(p)=0$, and the next Abresch--Gromoll--type excess estimate deals with the case $E(p)\neq 0$. In order to deal with scale-invariant quantities, we define
\[
\Psi_{N, R}(E, l, K)=
\begin{cases}
E+\sqrt{ER}+(ER^{N-2})^{\frac{1}{N-1}}+(c_{K, N}(l-3R)R^{N})^{\frac{1}{N-1}}&\text{if $N>2$},\\
E\log(2+\frac{R}{E})+\sqrt{ER}+c_{K, N}(l-3R)R^{2}\log\left(2+\frac{1}{c_{K, N}(l-3R)R}\right)&\text{if $N=2$},\\
E+\sqrt{ER}+c_{K, N}(l-3R)R^{2}&\text{if $N<2$}.
\end{cases}
\]
and notice that
\[
\lim_{E, l^{-1}, K\to 0} \Psi_{N, R}(E, l, K)=0.
\]

\begin{theorem}\label{thm:AG2}
Let $(X,\sfd,\mm)$ be an infinitesimally Hilbertian $CD(K,N)$ space, $K\leq 0$,  $N\in (1,+\infty)$. 

Then with the notation as above for any $\alpha>0$ there exist $A(N, \alpha),C(N,\alpha)>0$ such that the following is true: given any $p\in\supp(\mm)$ and $R>0$  such that 
\begin{itemize}
\item[i)]
$l(p)>3R$,
\item[ii)]
$ KR^{2}\geq -\alpha$, 
\item[iii)] 
$2R\, c_{K, N}(l(p)-3R)\leq A(N, \alpha)$,
\end{itemize}
it holds
\beq
\label{eq:ABestimate2}
\sup_{B_R(p)}E\leq C(N, \alpha)\Psi_{N, R}(E(p), l(p), K).
\eeq
\end{theorem}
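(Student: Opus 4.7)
The approach is to adapt the barrier-and-maximum-principle argument of Proposition \ref{prop:preAB} to an arbitrary point $\bar x \in B_R(p)$, using $p$ as the witness; the main new feature compared to Theorem \ref{thm:AG1} is that $E(p)$ is no longer zero, so a careful book-keeping of this contribution is required.

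Fix $\bar x \in B_R(p)$ and set $d := \sfd(\bar x, p) < R$. Hypothesis (i) gives $l(\bar x) \geq l(p) - d > 2R$, so every $R' \in (d, 2R)$ satisfies $R' < l(\bar x)$. The monotonicity of $c_{K,N}$, the linearity of $\bd$, and the Laplacian comparison \eqref{eq:compdis2} yield $\bd E \restr{B_{R'}(\bar x)} \leq a\mm$ with $a := 2c_{K,N}(l(p) - 3R)$, while the previous lemma gives $\bd \phi_{K,N}(\sfd(\bar x, \cdot), R') \geq \mm$ on $B_{R'}(\bar x) \setminus \{\bar x\}$. Arguing as in the proof of Proposition \ref{prop:preAB}, the function $a\phi_{K,N}(\sfd(\bar x, \cdot), R') - E$ is subharmonic on the annulus $B_{R'}(\bar x) \setminus \overline{B_\theta(\bar x)}$, $\theta \in (0, d)$; estimating its boundary values via $\phi_{K,N}(R', R') = 0$, $E\geq 0$, and the $2$-Lipschitz property of $E$, the weak maximum principle (Corollary \ref{cor:weakmax}) evaluated at the interior point $p$ gives
\begin{equation*}
a\phi_{K,N}(d, R') - E(p) \;\leq\; \bigl(a\phi_{K,N}(\theta, R') + 2\theta - E(\bar x)\bigr)_+,\qquad \forall\,\theta \in (0, d).
\end{equation*}

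This quantitative dichotomy produces two regimes: either $a\phi_{K,N}(d, R') \leq E(p)$, which via \eqref{eq:compphi} and the explicit asymptotics of $\phi_{0,N}$ constrains $d$ in terms of $R'$ and $E(p)$, or else, after infimizing in $\theta$,
\[
E(\bar x) \;\leq\; E(p) + \inf_{0 < \theta < d}\bigl(a\phi_{K,N}(\theta, R') + 2\theta\bigr) - a\phi_{K,N}(d, R').
\]
Computing this infimum as in the proof of Theorem \ref{thm:AG1}, one obtains a bound of order $(aR'^N)^{1/(N-1)}$ for $N > 2$, with the appropriate logarithmic (for $N=2$) or quadratic (for $N<2$) corrections. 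Taking $R' \sim R$ the main barrier term $(c_{K,N}(l(p)-3R)R^N)^{1/(N-1)}$ of $\Psi_{N,R}$ appears; combining the constraint from the first alternative with the Lipschitz bound $E(\bar x) \leq E(p) + 2d$, and evaluating the dichotomy for two further choices of $R'$ (essentially $R' \sim d$ and $R'$ tuned so that the critical scale $d_0(R')$ defined by $a\phi_{K,N}(d_0, R') = E(p)$ equals $R$), produces the remaining $E(p)$, $\sqrt{E(p)R}$, and $(E(p) R^{N-2})^{1/(N-1)}$ contributions. Hypothesis (ii) enters through \eqref{eq:compphi} to bound $\phi_{K,N}$ by a multiple of $\phi_{0,N}$ with a constant depending only on $N$ and $\alpha$; hypothesis (iii) ensures the admissibility of the range of $R'$ and that the $\theta$-minimizer lies strictly inside $(0, d)$.

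The main obstacle lies in organizing the case analysis so that all three $E(p)$-dependent terms of $\Psi_{N,R}$ emerge from a unified choice of admissible $R'$'s, with constants depending only on $N$ and $\alpha$: the first alternative $a\phi_{K,N}(d,R') \leq E(p)$ only forces a lower bound on $d$ (not an upper one), so the desired terms arise only after coupling this with the Lipschitz estimate and with the complementary max-principle bound, and merging the resulting family of inequalities into a single estimate uniform in $\bar x$. Once this is in place, passing to the supremum over $\bar x \in B_R(p)$ yields \eqref{eq:ABestimate2}.
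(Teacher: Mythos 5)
Your derivation of the max-principle dichotomy is correct, and the strategy of replacing the paper's varying coefficient $a$ in the barrier by a varying outer radius $R'$ is a genuinely different organization of the argument. Unfortunately, with $a$ pinned at $2c_{K,N}(l(p)-3R)$ the proposal has a real gap, precisely at the obstacle you flag.

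The problem is the regime in which the first alternative $a\phi_{K,N}(d,R')\leq E(p)$ holds for \emph{every} admissible $R'\in(d,2R]$, i.e.\ when $a\phi_{K,N}(d,2R)\leq E(p)$. Since $\phi_{K,N}(d,\cdot)$ is increasing, the set of $R'$ satisfying the first alternative is an interval $(d,R^*]$; when $R^*\geq 2R$ the complementary max-principle bound never becomes available, for any choice of $R'$. In that regime the only estimate left is $E(\bar x)\leq E(p)+2d\leq E(p)+2R$, and the constraint from the first alternative at $R'=2d$ gives only $d\lesssim\sqrt{E(p)/a}$. Neither $2R$ nor $\sqrt{E(p)/a}$ is controlled by $\Psi_{N,R}(E(p),l(p),K)$ with a constant depending only on $(N,\alpha)$: hypothesis (iii) says $a\leq A(N,\alpha)/R$, so $a$ can be arbitrarily small relative to $1/R$, and then $\sqrt{E(p)/a}\gg\sqrt{E(p)R}$. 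Concretely, fix $N,\alpha,R$, take $a=A/k$ with $k$ large, $E(p)=aR^2$, and $d$ close to $R$; then the first alternative holds for $R'=2R$ (since $\phi_{K,N}(d,2R)\leq\phi_{K,N}(R,2R)\sim R^2$ with a constant $<1$), the Lipschitz bound gives $E(\bar x)\lesssim 2R$, while every term in $\Psi_{N,R}$ is $O(\sqrt{A/k})\to 0$. So no constant $C(N,\alpha)$ works, yet nothing in your argument excludes this configuration.

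What is missing is the freedom to \emph{increase} $a$ in the barrier beyond $2c_{K,N}(l(p)-3R)$: since $\bd E\restr{B_{2r}(\bar x)}\leq 2c_{K,N}(l(p)-3R)\mm\leq a'\mm$ for every $a'\geq 2c_{K,N}(l(p)-3R)$, the function $a'\phi_{K,N}(\sfd(\bar x,\cdot),2r)-E$ is still subharmonic, and the paper uses this to set $a'=E(p)/\phi_{K,N}(r,2r)$ (making the witness condition tight). The constraint is that $a'$ must stay below $A/R$ so that condition \eqref{eq:conda} -- which guarantees the $\theta$-minimizer is interior -- still holds; this is exactly what hypothesis (iii) provides. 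It is the threshold $A/R$, and not the fixed Laplacian bound $a$, that enters the critical-scale estimates: Case 2 ($a'\leq A/R$) yields the $(E(p)R^{N-2})^{1/(N-1)}$ term, and Case 3 (even $a'=A/R$ fails) gives $\phi_{K,N}(r,2r)\leq E(p)R/A$, hence $r\lesssim\sqrt{E(p)R}$ and the $\sqrt{E(p)R}$ term with the correct $R$-dependence. Varying $R'$ alone cannot reproduce this because, in the bad regime above, every admissible $R'$ lands you in the first alternative. To repair your argument, keep your framework but allow the barrier coefficient to range over $[2c_{K,N}(l(p)-3R),\,A(N,\alpha)/R]$; then the dichotomy over the pair $(a',R')$ does close the gap, and in fact collapses back to the paper's three cases with $R'=2r$.
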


\begin{proof}
We start with the following

\noindent{\bf Claim: } {\em if $g$ satisfies the hypotheses of Proposition \ref{prop:preAB} in $B_{2r}(\bar x)$ for some $\bar x$ such that $\sfd(\bar x, p)=r\leq R$ and an $a$ such that
\beq
\label{eq:conda}
a\leq \frac{N}{2^{N}R}\left(\frac{s_{K, N} (2R)}{2R}\right)^{1-N},
\eeq
then $g(\bar x)$ is bounded as in \eqref{eq:AB} with the following choices:}
\[
h:=2r\qquad \text{and} \qquad D_{K, N}(a, R):=\left(\frac{s_{K, N}(2R)}{2R}\right)^{N-1}\frac{a}{2N}.
\]
Indeed, for $r\leq R$ and $a>0$ the function
\[
\Phi_{r}(\theta)=2\theta +\frac{a}{2N}\left(\frac{s_{K, N}(2R)}{2R}\right)^{N-1}\phi_{0, N}(\theta, 2r).
\]
is convex and $\Phi_{r}'\downarrow -\infty$ for $\theta\downarrow 0$. Moreover, if  \eqref{eq:conda} holds,  it is easily verified that $\Phi_{r}'(r)\geq 0$ and therefore its minimum
is achieved at some $\theta_0\in (0, r)$. Now the same computations done in  the previous theorem give the claim.

Let then 
\[
A(N, \alpha):=\frac{N}{2^{N}}\left(s_{-4\alpha, N}(1)\right)^{1-N},
\]
and notice that using \eqref{eq:scalings}, \eqref{eq:monotonics} and (ii) we get the bound 
\beq
\label{eq:A}
A(N, \alpha)\leq  \frac{N}{2^{N}}\left(\frac{s_{K, N} (2R)}{2R}\right)^{1-N}.
\eeq
Since for any $\bar x\in B_R(p)$, $B_{2R}(\bar x)\subseteq B_{3R}(p)$,  (i) and the monotonicity of $c_{K, N}$ imply that 
\beq
\label{eq:laplc}
\text{for any $\bar x$ such that $\sfd(\bar x, p)=r\leq R$},\qquad  \bd E\restr{B_{2r}(\bar x)}\leq 2c_{K, N}(l(p)-3R),
\eeq
which guarantees the validity of hypothesis (ii) in $B_{2r}(\bar x)$ (and, clearly, (i)) in Proposition \ref{prop:preAB} for any $\bar x\in B_R(p)$ and any $a\geq 2c_{K, N}(l(p)-3R)$. Letting now 
\[
\bar E(r):=\sup_{\bar x\in S_r(p)}E(\bar x),
\]
we will estimate $\bar E(r)$ separately for three ranges of $r$, depending on how large $\phi_{K, N}(r, 2r)$ is compared to $E(p)$. In each range we will estimate $\bar E(r)$ through a constant multiple (depending only on $\alpha$ and $N$) of a term appearing in $\Psi_{K, N}$, and summing up the bounds we will obtain \eqref{eq:ABestimate2}.  We consider for simplicity just the case $N>2$, the other ones following along the same lines.

\noindent{\bf Case 1: } suppose $r$ satisfies 
\[
E(p)<2c_{K, N}(l(p)-3R)\phi_{K, N}(r, 2r).
\] 

By \eqref{eq:laplc} the hypotheses of Proposition \ref{prop:preAB} are satisfied for $a=2c_{K, N}(l(p)-3R)$. Moreover,  (iii) and \eqref{eq:A} guarantee that \eqref{eq:conda} holds true for this value of $a$, and the initial claim provides the estimate
\beq
\label{eq:E1}
\bar E(r)\leq 2\frac{N-1}{N-2}\left(D_{K, N}(a, R)(2r)^{N}\right)^{\frac{1}{N-1}}\leq2\frac{N-1}{N-2} \left(\frac{c_{K, N}(l(p)-3R)R^{N}}{A(N, \alpha)}\right)^{\frac{1}{N-1}}.
\eeq
where we used the inequality
\[
2^{N}D_{K, N}(a, R)\leq \frac{a}{2 A(N, \alpha)},
\]
which holds by \eqref{eq:A}.

\noindent{\bf Case 2: } suppose $r$ satisfies 
\[
2c_{K, N}(l(p)-3R)\phi_{K, N}(r, 2r)\leq E(p)< \frac{A(N, \alpha)}{R}\phi_{K, N}(r, 2r),
\]
(notice that by (iii) the left hand side is not greater the right one), and choose $\eps>0$ such that the previous chain of inequalities holds true with $E(p)+\eps$ instead of $E(p)$.
Recalling \eqref{eq:laplc}, the hypotheses of Proposition \ref{prop:preAB} hold true for 
\[
a=\frac{E(p)+\eps}{\phi_{K, N}(r, 2r)}\geq 2c_{K, N}(l(p)-3R).
\]
Moreover, by \eqref{eq:A} and the condition on $r$, $a$ also satifies \eqref{eq:conda}. Applying the claim and letting $\eps\downarrow 0$ gives, for these $r$'s, the bound
\beq
\label{eq:basta}
\bar E(r)\leq  2\frac{N-1}{N-2}\left(\frac{E(p)r^{N}}{A(N, \alpha)\phi_{K, N}(r, 2r)}\right)^{\frac{1}{N-1}}.
\eeq
Using the scaling property \eqref{eq:scalingphi} and estimate \eqref{eq:compphi}, one gets
\beq
\label{eq:phir2r}
\phi_{K, N}(r, 2r)= r^{2}\phi_{r^{2}K, N}(1, 2)\geq r^{2}\phi_{0, N}(1,2),
\eeq
which, plugged into \eqref{eq:basta}, gives
\beq
\label{eq:E2}
\bar E(r)\leq 2\frac{N-1}{N-2}\left(\frac{E(p)R^{N-2}}{A(N, \alpha)\phi_{0, N}(1, 2)}\right)^{\frac{1}{N-1}}.
\eeq

\noindent{\bf Case 3: } suppose finally that $r$ satisfies 
\[
\frac{A(N, \alpha)}{R}\phi_{K, N}(r, 2r)\leq E(p).
\] 

From \eqref{eq:phir2r} we get 
\[
r\leq \sqrt{\frac{E(p)R}{A(N, \alpha)\phi_{0, N}(1,2)}},
\]
and since $E$ is 2-Lipschitz we obtain
\beq
\label{eq:E3}
\bar E(r)\leq E(p)+2r\leq E(p)+2\sqrt{\frac{E(p)R}{A(N, \alpha)\phi_{0, N}(1,2)}}.
\eeq
We conclude summing up the right hand sides of \eqref{eq:E1}, \eqref{eq:E2}, \eqref{eq:E3} to obtain a bound valid for all $r\leq R$, which is easily checked to be of the form \eqref{eq:ABestimate2}. 
\end{proof}

\bibliographystyle{siam}
\bibliography{biblio}

\end{document}